\newtheorem{theorem}{Theorem}[section]
\newtheorem{lemma}[theorem]{Lemma}
\newtheorem{corollary}[theorem]{Corollary}
\newtheorem{prop}[theorem]{Proposition}
\newtheorem{remark}[theorem]{Remark}
\newcommand{\R}{\mathbb{R}}
\begin{document}
\title[Improved GWP for defocusing sixth-order Boussinesq equations]{Improved global well-posedness for defocusing sixth-order Boussinesq equations}

\author{Dan-Andrei Geba and Evan Witz}

\address{Department of Mathematics, University of Rochester, Rochester, NY 14627, U.S.A.}
\email{dangeba@math.rochester.edu}
\address{Department of Mathematics, University of Rochester, Rochester, NY 14627, U.S.A.}
\email{ewitz@ur.rochester.edu}
\date{}

\begin{abstract}
This article studies the global well-posedness (GWP) for a class of defocusing, generalized sixth-order Boussinesq equations, extending a previous result obtained by Wang-Esfahani \cite{WE-14} for the case when the nonlinear term is cubic.
\end{abstract}

\subjclass[2000]{35B30, 35Q53}
\keywords{nonlinear Boussinesq equation, well-posedness, I-method, multilinear estimates.}

\maketitle

\section{Introduction}

\subsection{Background of the problem}
Our goal is to study the initial value problem (IVP) associated to generalized sixth-order Boussinesq equations given by
\begin{equation}
\begin{cases}
u_{tt}-u_{xx}-\beta u_{xxxx}-u_{xxxxxx}\,=\,(f(u))_{xx}, \quad u=u(t,x)\in \mathbb{R}_+\times \R \to \R,\\
u(0,x)\,=\,g(x),\qquad u_t(0,x)\,=\,h_x(x),\\
\end{cases}
\label{main}
\end{equation}
where $\beta=\pm 1$. This type of equations is physically relevant, being originally derived by Christov-Maugin-Velarde \cite{CMV-96} in the context of shallow fluid layers and nonlinear atomic chains. It was also later tied to modeling small amplitude and long capillary-gravity waves by Daripa-Hua \cite{DH-99}, along with describing nonlinear dynamics in elastic crystals by Maugin \cite{M-99}.
  
The IVP \eqref{main} with power-type nonlinearity (i.e., $f(u)\simeq u^p$) has received considerable interest lately, with a focus on local and global existence of solutions, as well as on sufficient conditions for blow-up in finite time. Esfahani-Farah \cite{EF-12} proved first that \eqref{main} with $f(u)=u^2$ is locally well-posed (LWP) for $(g,h)\in H^s(\R)\times H^{s-1}(\R)$ when $s>-1/2$, a result which was improved by Esfahani-Wang \cite{EW-14} to allow $s>-3/4$. For the case when $f(u)=|u|^\alpha u$ with $\alpha>0$,  Esfahani-Farah-Wang \cite{EFW-12} showed that \eqref{main} is LWP when $h=\tilde{h}_x$ and either $(g,\tilde{h})\in H^1(\R)\times L^2(\R)$ or $(g,\tilde{h})\in L^2(\R)\times \dot{H}^{-1}(\R)$ (this under the further restriction $\alpha<4$). The same paper also established small data GWP in the case when $f(u)=-|u|^\alpha u$ with $\alpha>0$, $h=\tilde{h}_x$, and $(g,\tilde{h})\in H^2(\R)\times \dot{H}^{1}(\R)$, and derived sufficient conditions for blow-up phenomena. Lastly, Wang-Esfahani \cite{WE-14} demonstrated that \eqref{main} with $f(u)=|u|^2 u$ is GWP for $(g,h)\in H^s(\R)\times H^{s-2}(\R)$ when $3/2<s<2$. This literature parallels the progress made on similar issues for the classical generalized Boussinesq equation
\begin{equation*}
u_{tt}-u_{xx}+u_{xxxx}\,=\,(f(u))_{xx},
\end{equation*}
by Linares \cite{L93}, Fang-Grillakis \cite{FG96}, Farah \cite{F092,F09}, Farah-Linares \cite{FL10}, Kishimoto-Tsugawa \cite{KT10}, Farah-Wang \cite{FW12}, and Kishimoto \cite{K13}.

\subsection{Main result and outline of the paper}\label{main-result}
Our aim here is to generalize the result obtained by Wang-Esfahani to the class of IVP \eqref{main} with $f(u)=|u|^{2k} u$, where $k\geq 2$ is an integer. The following is the main contribution of this article.

\begin{theorem} 
The Cauchy problem \eqref{main}, with $f(u)=|u|^{2k} u$ and $k\geq 2$ being an integer, is GWP for $(g,h)\in H^s(\R)\times H^{s-2}(\R)$ and $2-2/(3k)<s<2$. In addition, the solution $u$ satisfies
\begin{equation}
\sup_{0\leq t\leq T} \left\{\|u(t)\|^2_{H^s(\R)}+\|(-\Delta)^{-1/2}u_t(t)\|^2_{H^{s-2}(\R)}\right\}\,\lesssim\,(1+T)^{\frac{4-2s}{6ks-12k+4}+}
\label{u-hs}
\end{equation}
for all $T>0$, where the implicit constant depends strictly on $s$, $\|g\|_{H^s(\R)}$, and $\|h\|_{H^{s-2}(\R)}$.  
\label{main-th} 
\end{theorem}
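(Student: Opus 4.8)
The plan is to prove global well-posedness via the I-method, which is the standard technology for pushing GWP below the conservation-law regularity. The starting point is local well-posedness at regularity $s$ (which, for $f(u)=|u|^{2k}u$, I would establish via the now-standard Bourgain-space ($X^{s,b}$) contraction argument for the sixth-order Boussinesq dispersion, analogous to what Esfahani-Farah and Wang-Esfahani do for the cubic case; the symbol $\sqrt{\xi^2+\beta\xi^4+\xi^6}$ behaves like $|\xi|^3$ at high frequency, giving the relevant smoothing). The conserved energy associated to \eqref{main} is, after writing $u_t = (-\Delta)^{1/2}v$ or working with the Hamiltonian formulation, essentially

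\begin{equation*}
E(u)\,=\,\int_{\R} \Bigl( |(-\Delta)^{-1/2}u_t|^2 + u^2 + \beta u_x^2 + u_{xx}^2 + \tfrac{2}{2k+2}|u|^{2k+2}\Bigr)\,dx,
\end{equation*}

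which controls the $H^2\times H^0$ norm of $(u,(-\Delta)^{-1/2}u_t)$ in the defocusing case. Since our data only lie in $H^s$ with $s<2$, this energy is infinite, so the first step is to introduce the Fourier multiplier operator $I=I_N$ that is the identity on frequencies $|\xi|\le N$ and smooths like $(N/|\xi|)^{2-s}$ on high frequencies, so that $Iu \in H^2$ and $E(Iu)$ is finite and comparable to $N^{2(2-s)}\|u\|_{H^s}^2$ up to the potential term.

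Next I would set up the almost-conservation machinery. First I would record the commutation/interpolation estimate $\|Iu\|_{H^2}\lesssim N^{2-s}\|u\|_{H^s}$ and its near-converse, so that control of the modified energy $E(Iu)$ gives control of $\|u\|_{H^s}$. Then the heart of the argument is to differentiate $E(Iu)$ in time along the flow and show that the result, far from vanishing, decays in $N$: the goal is an increment bound of the schematic form

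\begin{equation*}
\bigl| E(Iu)(\delta) - E(Iu)(0)\bigr| \,\lesssim\, N^{-\theta}\,\bigl(\text{powers of the } X^{s,b}\text{-norm of } Iu\bigr)
\end{equation*}

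on a local existence interval $[0,\delta]$, for some $\theta=\theta(s,k)>0$. The error term arising from $\frac{d}{dt}E(Iu)$ is a commutator between $I$ and the nonlinearity $|u|^{2k}u$; because $I$ acts as the identity on low frequencies, this term would vanish if $I$ were the identity, and its size is governed by how much frequency interaction is forced onto the high-frequency region where $I$ deviates from $1$. I would estimate it by passing to the Fourier side, bounding the multiplier symbol $1 - m(\xi_1+\dots+\xi_{2k+1})/\prod m(\xi_j)$ (where $m$ is the symbol of $I$) on the relevant frequency cube, and then distributing derivatives and applying the bilinear/multilinear $X^{s,b}$ estimates for the Boussinesq dispersion together with the $L^{2k+2}$ Strichartz-type bounds.

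The main obstacle — and the place where the exponent $2-2/(3k)$ is forced — will be this multilinear commutator estimate for a nonlinearity of degree $2k+1$. As $k$ grows, the product $\prod_{j=1}^{2k+1} m(\xi_j)$ in the denominator makes the multiplier harder to control, and one must carefully track the worst-case frequency configuration (typically where two large frequencies nearly cancel, forcing output into the low-frequency zone) to extract the gain $N^{-\theta}$ with the optimal $\theta$; the number $\theta = 3ks - 6k + 2$ (whose positivity is exactly $s > 2 - 2/(3k)$) should emerge as the power of $N$ left over after balancing the $N^{2-s}$ losses from each $I$ against the smoothing of the dispersion. Once this increment bound is in hand, the global argument is routine bookkeeping: I would rescale so that the initial modified energy is $O(1)$, iterate the almost-conservation bound over $\sim N^{\theta}T$ time steps so that the total energy growth stays bounded up to time $T$, choose $N=N(T)$ as a power of $T$ to keep the increment under control, and finally undo the scaling to recover the polynomial-in-$T$ growth rate $(1+T)^{\frac{4-2s}{6ks-12k+4}+}$ for $\|u\|_{H^s}^2$ asserted in \eqref{u-hs}.
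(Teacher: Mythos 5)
Your overall framework (I-method: smoothing operator $I_N$, modified energy $E(Iu)$, a commutator-type multilinear estimate giving a negative power of $N$ per local step, then iteration with $N=N(T)$) is the same as the paper's. But there is a genuine gap at the final "routine bookkeeping" stage: you propose to \emph{rescale so that the initial modified energy is $O(1)$} and then undo the scaling at the end. This step is not available here. The linear part $u_{tt}-u_{xx}-\beta u_{xxxx}-u_{xxxxxx}$ contains three spatial terms of different orders with fixed coefficients, so the equation has no scaling symmetry; any rescaling $u\mapsto \lambda^a u(\lambda^b t,\lambda x)$ changes the relative coefficients, and none of the estimates come with the uniformity in those coefficients that would be needed to absorb this. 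The paper flags exactly this as the obstruction making the I-method "less direct" for Boussinesq equations. The replacement for the scaling step — and the ingredient missing from your outline — is a \emph{quantitative local well-posedness theorem for the $I$-system itself} (the paper's Theorem 3.1): a contraction argument in $X^{2,\frac12+}_\delta$ for the equation satisfied by $Iu$, yielding not only $\|Iu\|_{X^{2,\frac12+}_\delta}\lesssim \|Ig\|_{H^2}+\|Ih\|_{L^2}\lesssim N^{2-s}$ but also the explicit lifespan lower bound $\delta^{\frac12-}\gtrsim (\|Ig\|_{H^2}+\|Ih\|_{L^2})^{-2k}\simeq N^{-2k(2-s)}$. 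Without this lower bound on $\delta$ the iteration count $T/\delta$ cannot be quantified and the global argument cannot close.

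Relatedly, your expectation that the commutator estimate itself produces a gain $N^{-\theta}$ with $\theta=3ks-6k+2$ misplaces where the threshold $s>2-2/(3k)$ comes from. In the paper the multilinear estimate gives a clean gain $N^{-4+}$, independent of $s$ and $k$ (valid once $s>1/4$ for $k=2$, $s>1/2$ for $k>2$); the energy increment per step is then $N^{-4+}\cdot N^{(2k+2)(2-s)}$, and the condition for the energy $\sim N^{4-2s}$ not to double over $T/\delta\simeq T\,N^{4k(2-s)+}$ steps is $T\lesssim N^{6ks-12k+4-}$. The exponent $6ks-12k+4$ — and hence the restriction $s>2-2/(3k)$ and the growth rate in \eqref{u-hs} — emerges only from this three-way balance of commutator gain, norm losses, and the lifespan $\delta$; it is not the gain in the multilinear estimate itself. (Two small sign slips, harmless but worth fixing: the dispersion symbol is $\omega(\xi)=(\xi^2-\beta\xi^4+\xi^6)^{1/2}$ and the energy carries $-\tfrac{\beta}{2}\|u_x\|^2_{L^2}$, not $+\beta$; coercivity for both signs of $\beta$ then follows from $\|u_x\|^2_{L^2}\leq\|u_{xx}\|_{L^2}\|u\|_{L^2}$.)
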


\noindent To comment on this theorem, let us start by observing the formal conservation of the energy
\begin{equation}
\aligned
E(u)(t):=\, &\frac{1}{2}\|u_{xx}(t)\|^2_{L^2(\R)}-\frac{\beta}{2}\|u_x(t)\|^2_{L^2(\R)}+\frac{1}{2}\|u(t)\|^2_{L^2(\R)}\\&+\frac{1}{2}\|(-\Delta)^{-1/2}u_t(t)\|^2_{L^{2}(\R)}+\frac{1}{2k+2}\|u(t)\|^{2k+2}_{L^{2k+2}(\R)},
\endaligned
\label{energy}
\end{equation}
which also satisfies\footnote{The energy is nonnegative and this is why we associate a defocusing terminology to this equation.}
\begin{equation}
E(u)(t)\simeq \, \|u(t)\|^2_{H^2(\R)}+\|(-\Delta)^{-1/2}u_t(t)\|^2_{L^{2}(\R)}+\|u(t)\|^{2k+2}_{L^{2k+2}(\R)},
\label{energy-bd}
\end{equation}
even for $-2<\beta<2$, due to the well-known inequality
\begin{equation*}
\|v_x\|^2_{L^2(\R)}\leq \|v_{xx}\|_{L^2(\R)}\|v\|_{L^2(\R)}.
\end{equation*}
This conservation partly motivates the challenging nature of our result, since the energy can be infinite and, thus, impractical for certain data $(g,h)\in H^s(\R)\times H^{s-2}(\R)$ with $s<2$. To deal with this shortcoming, we rely on the \textit{I-method}, also known as the \textit{method of almost conservation laws}, pioneered by Colliander-Keel-Staffilani-Takaoka-Tao \cite{CKSTT-01, CKSTT-01-2} for KdV and nonlinear Schr\"{o}dinger equations, respectively. However, the implementation of this technique is slightly less direct here, as Boussinesq equations are not scale-invariant, unlike the dispersive equations for which the method was originally designed. A final observation is that, by comparison to Wang-Esfahani's work \cite{WE-14} (i.e., $k=1$), we obtain an improved key multilinear estimate \eqref{multi-I-int} which enhances the predicted range $2-1/(2k)<s<2$ to the one proven in the above theorem. Furthermore, the proof of this bound is streamlined to include fewer cases than its counterpart in \cite{WE-14}.

The structure of this paper is as follows. In section \ref{toolbox}, we introduce the analytic toolbox, which includes the functional spaces and the appropriate estimates to be used in the analysis, along with the smoothing operator $I$ and its properties. In section \ref{LWP-th}, we work on proving a LWP result for the equation obtained by the application of operator $I$ to the original Boussinesq equation \eqref{main}.  We follow this in section \ref{multilinear} with the proof of the crucial multilinear estimate, which allows us to demonstrate Theorem \ref{main-th} in the final section.
 
\subsection*{Acknowledgements}
The first author was supported in part by a grant from the Simons Foundation $\#\, 359727$.


\section{Analytic toolbox} \label{toolbox}

\subsection{Notational conventions}

First, we agree to write $A\lesssim B$ when $A\leq CB$ and $A\ll B$ when $A\leq C^{-1}B$, where $C>2$ is a constant depending only upon parameters which are considered fixed throughout the paper. Moreover, we write $A\sim B$ to denote that both $A\lesssim B$ and $B\lesssim A$ are valid. We also use the notation $a\pm=a\pm\varepsilon$ when $0<\varepsilon\ll 1$ is a universal constant.

Secondly, as is the custom for $w=w(t,x): I\times \R\to\R$ with $I\subseteq \R$ being an arbitrary time interval, we rely on
\begin{equation*}
\aligned
\|w\|_{L_t^pL^q_x(I\times \R)}:&= \left(\int_I\|w(t,\cdot)\|_{L^q(\R)}^{p}dt\right)^{1/p},\\
\|w\|_{L_x^pL^q_t(I\times \R)}:&= \left(\int_\R\|w(\cdot,x)\|_{L^q(I)}^{p}dt\right)^{1/p},
\endaligned
\end{equation*}
with the obvious modification when $p=\infty$. Furthermore, for ease of notation, we write
\begin{equation*}
\aligned
\|w\|_{L_t^pL_x^q}=\|w\|_{L_t^pL^q_x(\R\times \R)},&\qquad \|w\|_{L_x^pL_t^q}=\|w\|_{L_x^pL^q_t(\R\times \R)},\\
\|w\|_{L_{t\in[0,\delta]}^pL_x^q}=\|w\|_{L_t^pL^q_x([0,\delta]\times \R)},&\qquad \|w\|_{L_x^pL_{t\in[0,\delta]}^q}=\|w\|_{L_x^pL^q_t([0,\delta]\times \R)}.
\endaligned
\end{equation*}
When $p=q$, we simplify the notation and write $L_t^pL_x^p=L_x^pL_t^p=L_{t,x}^p$.

Finally, we denote by
\begin{equation*}
\widehat{v}(\xi):=\int_\R e^{-ix\xi}\,v(x)\,dx\quad \text{and}\quad \widetilde{w}(\tau, \xi) :=\int_{\R^2} e^{-i(t\tau+x\xi)}\,w(t,x)\,dt\,dx
\end{equation*}
the Fourier transform of $v=v(x)$ and the spacetime Fourier transform of $w=w(t,x)$, respectively.


\subsection{Relevant norms and related estimates}

We start by writing $\langle a\rangle:=(1+a^2)^{1/2}$ and $\omega(\xi):=(\xi^2-\beta\xi^4+\xi^6)^{1/2}$, which allows us to to define the Sobolev and Bourgain-type norms
\begin{equation*}
\aligned
&\|v\|_{H^s}:=\|\langle \xi\rangle^s\widehat{v}(\xi)\|_{L^2_\xi(\R)},\\ 
\|w\|_{X^{s,\theta}}&:=\|\langle \xi\rangle^s\langle |\tau|-\omega(\xi)\rangle^\theta\widetilde{w}(\tau,\xi)\|_{L^2_{\tau,\xi}(\R^2)},
\endaligned
\end{equation*}
for $s$, $\theta\in \R$. Working directly with these norms, one can easily prove the classical bound
\begin{equation}
\|w\|_{L^\infty_tH^s_x}\lesssim \|w\|_{X^{s,\theta}}
\label{Sob-X}
\end{equation}
and the inclusion $X^{s,\theta}\subset C(\R; H^s(\R))$, both for all $s\in \R$ and $\theta>1/2$.

For $\delta>0$, we also use the truncated norm
\begin{equation}
\|z\|_{X^{s,\theta}_\delta}:=\inf_{w=z\, \text{on}\, [0,\delta]} \|w\|_{X^{s,\theta}}.
\label{xstd}\end{equation}
We observe that according to Remark 3.1 in \cite{EF-12} one has 
\begin{equation}
\|w\|_{X^{s,\theta}} \simeq \|\langle \xi\rangle^s\langle |\tau|-|\xi|^3+\frac{\beta}{2}|\xi|\rangle^\theta\widetilde{w}(\tau,\xi)\|_{L^2_{\tau,\xi}(\R^2)},
\label{xst-equiv}
\end{equation}
which suggests that we may derive estimates for this norm using known bounds for the Airy equation $v_t+v_{xxx}=0$. Indeed, we can prove this next result.

\begin{prop}
The following estimates hold true:
\begin{align}
\|w\|_{L_t^pL_x^q}\lesssim \|w\|&_{X^{0,\frac{1}{2}+}} ,\quad \frac{3}{p}+\frac{1}{q}=\frac {1}{2}, \quad 2\leq q\leq \infty, \,\quad (\text{Strichartz})\label{w-Str}\\
&\|w_x\|_{L_x^\infty L_t^2}\lesssim \|w\|_{X^{0,\frac{1}{2}+}}, \qquad\qquad\qquad (\text{Kato smoothing})\label{w-K}\\
\|w&\|_{L_x^4L_t^\infty}\lesssim \|D^{1/4}_xw\|_{X^{0,\frac{1}{2}+}}, \quad\qquad\qquad\ \ (\text{maximal function}) \label{w-mf}
\end{align}
where $D_x=(-\Delta)^{1/2}$ is the multiplier operator given by $\widehat{D_xv}(\xi)=|\xi|\widehat{v}(\xi)$. The same bounds are valid with $L_t^pL_x^q$, $L_x^pL^q_t$, and $X^{s,\theta}$ replaced by $L_{t\in[0,\delta]}^pL_x^q$, $L_x^pL_{t\in[0,\delta]}^q$, and $X^{s,\theta}_\delta$, respectively, for all $\delta>0$.

\end{prop}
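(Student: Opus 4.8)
The plan is to deduce all three bounds from the corresponding estimates for the free Airy evolution via the standard transference (Bourgain-space transfer) principle, using the equivalence \eqref{xst-equiv} to replace the Boussinesq dispersion relation by an Airy-type phase. Set $\psi(\xi):=|\xi|^3-\frac{\beta}{2}|\xi|$, so that \eqref{xst-equiv} reads $\|w\|_{X^{s,\theta}}\simeq\|\langle\xi\rangle^s\langle|\tau|-\psi(\xi)\rangle^\theta\widetilde w\|_{L^2_{\tau,\xi}}$; the content of \eqref{xst-equiv} is precisely that $\omega(\xi)-\psi(\xi)$ stays bounded on all of $\R$ (it decays like $|\xi|^{-1}$ at high frequency and is $O(1)$ near the origin, since $\omega^2-\psi^2=(1-\beta^2/4)\,\xi^2$ with $\beta=\pm1$). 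Consequently the whole analysis may be carried out with $\psi$ in place of $\omega$.

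First I would record the classical Kenig--Ponce--Vega estimates for the unitary group $S(t):=e^{it\psi(D)}$, with $\widehat{S(t)f}(\xi)=e^{it\psi(\xi)}\widehat f(\xi)$:
\begin{align*}
\|S(t)f\|_{L_t^pL_x^q}&\lesssim\|f\|_{L^2_x},\qquad \tfrac{3}{p}+\tfrac1q=\tfrac12,\ 2\le q\le\infty,\\
\|\p_x S(t)f\|_{L_x^\infty L_t^2}&\lesssim\|f\|_{L^2_x},\\
\|S(t)f\|_{L_x^4L_t^\infty}&\lesssim\|D_x^{1/4}f\|_{L^2_x},
\end{align*}
which are the Strichartz, local smoothing, and maximal-function bounds for the Airy equation. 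The difference between $\psi$ and the pure Airy symbol $\xi^3$ is harmless for the oscillatory-integral arguments underlying them: the lower-order term $-\frac\beta2|\xi|$ and the replacement of $\xi^3$ by $|\xi|^3$ only translate or reflect the phase and leave the curvature $\psi''(\xi)\sim|\xi|$ unchanged away from the origin, while the low-frequency piece $|\xi|\lesssim1$ is controlled directly as a bounded localized multiplier.

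The transference from $S(t)$ to $X^{0,1/2+}$ then proceeds as follows. Because the Boussinesq weight involves $|\tau|$ rather than $\tau$, I would split $w=w_++w_-$ according to $\pm\tau>0$; on $w_\pm$ one has $\langle|\tau|-\psi(\xi)\rangle=\langle\mp\tau-\psi(\xi)\rangle$, the weight attached to the forward/backward group $e^{\mp it\psi(D)}$, and the two pieces are exchanged by $t\mapsto-t$ together with conjugation, so it suffices to treat $w_+$. Setting $g_\lambda:=\mathcal F^{-1}_\xi\big[\widetilde{w_+}(\lambda+\psi(\xi),\xi)\big]$ gives the representation $w_+(t,x)=\frac{1}{2\pi}\int_\R e^{it\lambda}\,(S(t)g_\lambda)(x)\,d\lambda$; applying Minkowski's inequality in $\lambda$, then the relevant free estimate above to each $S(t)g_\lambda$, and finally Cauchy--Schwarz in $\lambda$ against $\langle\lambda\rangle^{-(1/2+)}$ (square-integrable precisely because $\theta>1/2$) converts $\int\|g_\lambda\|_{L^2}\,d\lambda$ into $\|w_+\|_{X^{0,1/2+}}$, since $\int\langle\lambda\rangle^{1+}\|g_\lambda\|_{L^2}^2\,d\lambda=\|w_+\|_{X^{0,1/2+}}^2$ after the change of variables $\tau=\lambda+\psi(\xi)$. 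This yields each of \eqref{w-Str}, \eqref{w-K}, and \eqref{w-mf} in turn, the argument using only that the target is a Banach space-time norm.

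Finally, the truncated statements follow immediately: given $z$ and any extension $w=z$ on $[0,\delta]$, the global bound applied to $w$ dominates the $[0,\delta]$-restricted left-hand side, and taking the infimum over such $w$ produces the $X^{s,\theta}_\delta$ norm on the right. The step I expect to be most delicate is the transference in the presence of the two dispersion branches together with the degeneracy of $\psi''$ at $\xi=0$: one must keep the branch splitting clean and ensure that the low-frequency regime, where $\omega(\xi)\sim|\xi|$ behaves like a transport rather than an Airy phase, does not spoil the smoothing and maximal-function estimates, which is exactly where a separate low-frequency localization is required.
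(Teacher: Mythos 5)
Your architecture (split $w$ into two modulation branches, represent each branch as $w_\pm(t,x)=\tfrac{1}{2\pi}\int e^{it\lambda}(S(t)g_\lambda)(x)\,d\lambda$, apply a free-group estimate, Cauchy--Schwarz in $\lambda$) is the standard transference scheme, and it parallels the paper's proof; the difference is \emph{which} group you transfer from. The paper splits by the sign of $\tau\xi$ (not $\tau$), reflects in time, and then performs the physical-space change of variables $v(t,x)=w(t,x\pm\tfrac{\beta}{2}t)$, so that the group ultimately invoked is exactly the Airy group with phase $\pm\xi^3$, for which the Kenig--Ponce--Vega bounds are known black boxes. You instead keep the phase $\psi(\xi)=|\xi|^3-\tfrac{\beta}{2}|\xi|$ and assert that the three KPV estimates hold verbatim for $S(t)=e^{it\psi(D)}$, on the grounds that the lower-order term leaves the curvature $\psi''$ essentially unchanged. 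That assertion is where the proof breaks.

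The gap is that for $\beta=+1$ the \emph{global-in-time} smoothing estimate $\|\partial_x S(t)f\|_{L_x^\infty L_t^2}\lesssim\|f\|_{L^2}$ is false for your $\psi$. The obstruction is not curvature but group velocity: $\psi'(\xi)=3\xi^2-\tfrac12$ (for $\xi>0$) vanishes at $\xi_0=1/\sqrt6$, a frequency that is neither low nor high, so neither your curvature remark nor your low-frequency localization sees it. Concretely, take $\widehat{f}$ a smooth bump with $\widehat{f}(\xi_0)\neq 0$ supported near $\xi_0$; stationary phase (the stationary point of $x\xi+t\psi(\xi)$ tends to $\xi_0$ as $t\to\infty$ with $x$ fixed, and $\psi''(\xi_0)=6\xi_0\neq0$) gives $|\partial_x S(t)f(x)|\sim \xi_0\,t^{-1/2}$ for bounded $x$ and large $t$, so $\int_1^\infty|\partial_x S(t)f(x)|^2\,dt$ diverges logarithmically. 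Equivalently, Plancherel in $t$ produces the Jacobian factor $\int \xi^2|\widehat f(\xi)|^2/|\psi'(\xi)|\,d\xi=\infty$. Since your Minkowski step applies the group bound to each $S(t)g_\lambda$ inside the \emph{global} mixed norm, this false ingredient cannot be absorbed; the argument collapses exactly for the Kato bound \eqref{w-K} when $\beta=+1$ (and the maximal bound would need a separate justification as well). The conclusion \eqref{w-K} is nonetheless true, because the Bourgain-norm estimate only encodes time-localized group information: it can be proved directly by Cauchy--Schwarz in $(\tau,\xi)$ using $\sup_\tau\int \xi^2\langle|\tau|-\omega(\xi)\rangle^{-1-}\,d\xi<\infty$, which holds since $\omega'(\xi)\simeq\langle\xi\rangle^2$ never vanishes --- the true dispersion $\omega$, unlike your $\psi$, has no stationary frequencies. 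So to repair your proof you must either prove the mixed-norm bounds directly at the Bourgain-space level as just indicated, or reduce, as the paper does, to the signed cubic phase via the boost (noting that even there the interaction of the boost with the non-translation-invariant norms $L_x^\infty L_t^2$ and $L_x^4L_t^\infty$ is the delicate point, which the paper itself treats only briskly).
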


\begin{proof}
First, we record the estimates proven by Kenig-Ponce-Vega (Lemma 2.4 in \cite{KPV-89}, Theorems 3.5 and 3.7 in \cite{KPV-93}) for solutions to the Airy equation:
\begin{align*}
\|v\|_{L_t^pL_x^q}\lesssim \|v&(0)\|_{L^2} ,\quad \frac{3}{p}+\frac{1}{q}=\frac {1}{2}, \quad 2\leq q\leq \infty, \\
&\|v_x\|_{L_x^\infty L_t^2}\lesssim \|v(0)\|_{L^2}, \\
\|v&\|_{L_x^4L_t^\infty}\lesssim \|D^{1/4}_xv(0)\|_{L^2}.
\end{align*}
It is easy to see that if $z_t-z_{xxx}=0$ then $v(t,x)=z(-t,x)$ solves the Airy equation and, hence, the previous three 
bounds also hold true for $z$. Then, we can use standard arguments (e.g, Lemma 2.9 in Tao \cite{T-06}) to transform these estimates into ones involving Bourgain-type norms:
\begin{align*}
\|v\|_{L_t^pL_x^q}\lesssim \|v\|&_{X^{0,\frac{1}{2}+}_{\tau=\pm \xi^3}} ,\quad \frac{3}{p}+\frac{1}{q}=\frac {1}{2}, \quad 2\leq q\leq \infty,\\
&\|v_x\|_{L_x^\infty L_t^2}\lesssim \|v\|_{X^{0,\frac{1}{2}+}_{\tau=\pm \xi^3}}, \\
\|v&\|_{L_x^4L_t^\infty}\lesssim \|D^{1/4}_xv\|_{X^{0,\frac{1}{2}+}_{\tau=\pm \xi^3}},
\end{align*}
with
\[
\|v\|_{X^{s,\theta}_{\tau=\pm \xi^3}} := \|\langle \xi\rangle^s\langle \tau\mp\xi^3\rangle^\theta\widetilde{v}(\tau,\xi)\|_{L^2_{\tau,\xi}(\R^2)}.
\]
Following this, a direct calculation shows that if $v(t,x)= w(t, x\pm \frac{\beta}{2}t)$ then $\widetilde{v}(\tau,\xi)= \widetilde{w}(\tau\mp \frac{\beta}{2}\xi,\xi)$ and, consequently,
\[
\|v\|_{X^{s,\theta}_{\tau=\pm \xi^3}}=\|w\|_{X^{s,\theta}_{\tau=\pm \xi^3\mp\frac{\beta}{2}\xi}}:= \|\langle \xi\rangle^s\langle \tau\mp\xi^3\pm\frac{\beta}{2}\xi\rangle^\theta\widetilde{w}(\tau,\xi)\|_{L^2_{\tau,\xi}(\R^2)}.
\] 
Furthermore, we infer based on \eqref{xst-equiv} that
\begin{equation*}
\|w\|_{X^{s,\theta}} \simeq \|w_1\|_{X^{s,\theta}_{\tau=\xi^3-\frac{\beta}{2}\xi}}+\|w_2\|_{X^{s,\theta}_{\tau=- \xi^3+\frac{\beta}{2}\xi}},
\end{equation*}
where
\[
w=w_1+w_2, \qquad \widetilde{w_1}=\widetilde{w_1}\cdot {\bf 1}_{\{\tau\xi\geq 0\}},  \qquad \widetilde{w_2}=\widetilde{w_1}\cdot {\bf 1}_{\{\tau\xi<0\}}.
\]
It is then clear that \eqref{w-Str}-\eqref{w-mf} follow as the combined result of the mathematical facts developed so far in this proof. For the same estimates, but in which one restricts the domain of variable $t$, we can use \eqref{xstd} to deduce
\[
\|w\|_{X^{0,\frac{1}{2}+}_\delta}\gtrsim \inf_{w=z\, \text{on}\, [0,\delta]} \|z\|_{L_t^pL_x^q}\geq \|w\|_{L_{t\in [0,\delta]}^pL_x^q}
\]
and a similar approach works for the other two bounds.
\end{proof}

\begin{remark}
In addition to these inequalities, we will also employ in our analysis
\begin{equation}
\|w\|_{L^\infty_{t,x}}\lesssim \|w\|_{X^{\frac{1}{2}+,\frac{1}{2}+}}
\label{w-infty}
\end{equation}
and its localized in time version, which is the joint conclusion of Sobolev embeddings and \eqref{w-Str} with $(p,q)=(\infty, 2)$.
\end{remark}

As an application of this proposition, we derive the following multilinear estimate.

\begin{corollary}
If $k\geq 2$ and $s\geq 1/2-2/(2k+1)$, then the inequality 
\begin{equation}
\|w_1\cdot w_2\ldots \cdot w_{2k+1}\|_{X^{s-1,0}}\lesssim \|w_1\|_{X^{s,\frac{1}{2}+}}\ldots \|w_{2k+1}\|_{X^{s,\frac{1}{2}+}}
\label{prod-xst}
\end{equation}
is valid.
\end{corollary}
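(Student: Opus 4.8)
The plan is to read the $X^{s-1,0}$ norm as a weighted spacetime $L^2$ norm and then to distribute the product \emph{symmetrically} across a single family of Strichartz-type bounds, which is precisely what produces the sharp threshold. Writing $\Phi = w_1 w_2 \cdots w_{2k+1}$ and recalling that $D_x=(-\Delta)^{1/2}$, Plancherel gives $\|\Phi\|_{X^{s-1,0}} = \|\langle D_x\rangle^{s-1}\Phi\|_{L^2_{t,x}}$, so the target becomes
\[
\|\langle D_x\rangle^{s-1}\Phi\|_{L^2_{t,x}} \lesssim \prod_{j=1}^{2k+1}\|w_j\|_{X^{s,\frac12+}}.
\]
The output weight $\langle\xi\rangle^{s-1}$, with $\xi=\xi_1+\cdots+\xi_{2k+1}$, is the only place the derivative count enters. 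When $s\le 1$ I would simply use $\langle\xi\rangle^{s-1}\le 1$ and discard it. When $s>1$, I would split the frequency domain according to which $\langle\xi_j\rangle$ is largest; by the symmetry of the estimate it suffices to treat the region where $\langle\xi_1\rangle=\max_j\langle\xi_j\rangle$, on which $|\xi|\le(2k+1)|\xi_1|$ forces $\langle\xi\rangle^{s-1}\lesssim\langle\xi_1\rangle^{s-1}$. In either regime the weight may be transferred to the top-frequency factor, reducing matters to
\[
\big\|(\langle D_x\rangle^{(s-1)_+}w_1)\cdot w_2\cdots w_{2k+1}\big\|_{L^2_{t,x}} \lesssim \prod_{j=1}^{2k+1}\|w_j\|_{X^{s,\frac12+}},
\]
where $(s-1)_+=\max(s-1,0)$.

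The heart of the argument is a \emph{balanced} H\"older distribution. Since $(2k+1)/(4k+2)=1/2$, H\"older in $(t,x)$ with all exponents equal to $4k+2$ yields
\[
\big\|(\langle D_x\rangle^{(s-1)_+}w_1)\, w_2\cdots w_{2k+1}\big\|_{L^2_{t,x}} \le \|\langle D_x\rangle^{(s-1)_+}w_1\|_{L^{4k+2}_{t,x}}\prod_{j=2}^{2k+1}\|w_j\|_{L^{4k+2}_{t,x}}.
\]
Next I would establish the single-function bound $\|w\|_{L^{4k+2}_{t,x}} \lesssim \|w\|_{X^{s_0+,\,\frac12+}}$, where $s_0:=1/2-2/(2k+1)$, by complex interpolation between the Strichartz estimate \eqref{w-Str} at the admissible endpoint $(p,q)=(8,8)$, namely $\|w\|_{L^8_{t,x}}\lesssim\|w\|_{X^{0,\frac12+}}$, and the embedding \eqref{w-infty}, $\|w\|_{L^\infty_{t,x}}\lesssim\|w\|_{X^{\frac12+,\frac12+}}$. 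Indeed $L^{4k+2}_{t,x}=[L^8_{t,x},L^\infty_{t,x}]_\theta$ with $\theta=(2k-3)/(2k+1)$, and interpolating the regularity index of the Bourgain norm (keeping the second index fixed at $\frac12+$) produces exactly $\theta\cdot\tfrac12 = 1/2-2/(2k+1)=s_0$. Note that $\theta\ge 0$ requires $k\ge 2$, which is consistent with the hypothesis and with the fact that $k=1$ is the separate case handled in \cite{WE-14}.

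Combining these and invoking the hypothesis $s\ge s_0$, each non-extremal factor satisfies $\|w_j\|_{L^{4k+2}_{t,x}}\lesssim\|w_j\|_{X^{s_0+,\frac12+}}\le\|w_j\|_{X^{s,\frac12+}}$, while the extremal factor obeys $\|\langle D_x\rangle^{(s-1)_+}w_1\|_{L^{4k+2}_{t,x}}\lesssim\|w_1\|_{X^{(s-1)_++s_0+,\frac12+}}$. The latter is acceptable because $(s-1)_++s_0\le s$: for $s\le 1$ this reduces to $s_0\le s$, and for $s>1$ to $s_0\le 1$, both of which hold since $s_0<\tfrac12$. Multiplying the $2k+1$ factor estimates then closes \eqref{prod-xst}.

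The main point — and the source of the improvement over the corresponding bound in \cite{WE-14} — is the decision to balance all $2k+1$ factors through the single exponent $4k+2$, rather than splitting them among Kato smoothing \eqref{w-K}, the maximal function estimate \eqref{w-mf}, and Strichartz norms; the latter strategy forces several factors into $L^\infty_t$ or $L^\infty_x$ slots and loses more derivatives, producing a strictly worse threshold. The delicate step to verify with care is the interpolated estimate for $\|w\|_{L^{4k+2}_{t,x}}$, including the bookkeeping of the $+$ losses near the endpoint $s=s_0$; everything else reduces to H\"older's inequality and the elementary frequency inequality $\langle\xi\rangle\lesssim\langle\xi_1\rangle$. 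Because the entire argument runs through one family of norms, it also proceeds in essentially a single case, matching the paper's claim of a streamlined proof.
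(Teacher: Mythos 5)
Your overall architecture matches the paper's more closely than you suggest: the paper also reduces $\|w_1\cdots w_{2k+1}\|_{X^{s-1,0}}$ to weighted $L^2_{t,x}$ product bounds via the Leibniz-type inequality $\langle\xi_1+\cdots+\xi_n\rangle^{s-1}\lesssim 1+\sum_j\langle\xi_j\rangle^{s-1}$ (equivalent to your max-frequency region decomposition), and it also uses the balanced H\"older step with all exponents equal to $4k+2$ for the unweighted term. The genuine difference — and the genuine gap — is in how you prove the single-function bound $\|w\|_{L^{4k+2}_{t,x}}\lesssim\|w\|_{X^{\sigma,\frac12+}}$. You interpolate between the Strichartz estimate at $(8,8)$ and the embedding \eqref{w-infty}. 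But \eqref{w-infty} necessarily carries regularity $\tfrac12+$ (since $H^{1/2}(\R)$ does not embed into $L^\infty$), so the interpolated regularity is $\theta(\tfrac12+\varepsilon)=s_0+\theta\varepsilon$ with $s_0=\tfrac12-\tfrac{2}{2k+1}$ and $\theta=\tfrac{2k-3}{2k+1}$; the loss $\theta\varepsilon$ is strictly positive for every admissible $\varepsilon>0$. Hence your chain $\|w_j\|_{L^{4k+2}_{t,x}}\lesssim\|w_j\|_{X^{s_0+,\frac12+}}\leq\|w_j\|_{X^{s,\frac12+}}$ breaks exactly at $s=s_0$, which the statement includes ($s\geq \tfrac12-\tfrac{2}{2k+1}$). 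You flagged this as ``bookkeeping,'' but it is structural: no choice of $\varepsilon$ rescues the endpoint within this interpolation scheme, so your argument proves \eqref{prod-xst} only for $s>s_0$.

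The fix is the paper's route to the same single-function bound, which is lossless: $(4k+2,\tfrac{2k+1}{k-1})$ is an admissible Strichartz pair, since $\tfrac{3}{4k+2}+\tfrac{k-1}{2k+1}=\tfrac12$, and the sharp one-dimensional Sobolev embedding gives $\|v\|_{L^{4k+2}_x}\lesssim\|J^{s_0}v\|_{L^{(2k+1)/(k-1)}_x}$ because $s_0=\tfrac{k-1}{2k+1}-\tfrac{1}{4k+2}$ exactly. Combining these with \eqref{w-Str} yields
\begin{equation*}
\|w\|_{L^{4k+2}_{t,x}}\lesssim\|J^{s_0}w\|_{L^{4k+2}_tL^{(2k+1)/(k-1)}_x}\lesssim\|J^{s_0}w\|_{X^{0,\frac12+}}=\|w\|_{X^{s_0,\frac12+}},
\end{equation*}
with no epsilon loss. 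Substituting this for your interpolation step, the rest of your argument closes for all $s\geq s_0$; in particular your placement of the weighted factor $J^{(s-1)_+}w_1$ in $L^{4k+2}_{t,x}$ is a harmless variant of the paper's choice, which instead puts $J^{s-1}w_1$ in $L^{\infty}_{t,x}$ via \eqref{w-infty} (affordable there, since the resulting requirement is only $s-1+\tfrac12+\leq s$) and the remaining factors in $L^{4k}_{t,x}$. One last correction: the balanced H\"older distribution in this corollary is not where the paper improves on the $k=1$ work of Wang--Esfahani; the improvement comes from the key multilinear estimate \eqref{multi-I-int} in Section \ref{multilinear}, whose proof does use the Kato smoothing and maximal function bounds you propose to avoid.
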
     

\begin{proof}
It is easy to see that for all $s$ we have
\[
\langle\xi_1+\ldots+\xi_n\rangle^{s-1}\lesssim 1+\langle\xi_1\rangle^{s-1}+\ldots+\langle\xi_n\rangle^{s-1},
\]
which implies the Leibniz-type bound
\[
\aligned
\|w_1\cdot w_2\ldots \cdot w_{2k+1}\|_{X^{s-1,0}}\lesssim\, &\|w_1\cdot w_2\ldots \cdot w_{2k+1}\|_{L^2_{t,x}}+\|J^{s-1}w_1\cdot w_2\ldots \cdot w_{2k+1}\|_{L^2_{t,x}}\\
&+\ldots+\|w_1\cdot\ldots w_{2k}\cdot J^{s-1}w_{2k+1}\|_{L^2_{t,x}},
\endaligned
\]
where $J$ is the multiplier operator given by $\widehat{Jv}(\xi)=\langle\xi\rangle \widehat{v}(\xi)$. This effectively reduces the proof of the desired bound to the ones of
\[
 \|w_1\cdot w_2\ldots \cdot w_{2k+1}\|_{L^2_{t,x}}\lesssim \|w_1\|_{X^{s,\frac{1}{2}+}}\ldots \|w_{2k+1}\|_{X^{s,\frac{1}{2}+}}
\]
and
\[
 \|J^{s-1}w_1\cdot w_2\ldots \cdot w_{2k+1}\|_{L^2_{t,x}}\lesssim \|w_1\|_{X^{s,\frac{1}{2}+}}\ldots \|w_{2k+1}\|_{X^{s,\frac{1}{2}+}}.
\]
However, these follows by using H\"{o}lder's inequality, Sobolev embeddings, \eqref{w-Str}, and \eqref{w-infty}:
\[
\aligned
\|w_1\cdot w_2\ldots \cdot w_{2k+1}\|_{L^2_{t,x}}&\lesssim \|w_1\|_{L^{4k+2}_{t,x}}\ldots\|w_{2k+1}\|_{L^{4k+2}_{t,x}}\\
&\lesssim\|J^{\frac{1}{2}-\frac{2}{2k+1}}w_1\|_{L^{4k+2}_{t}L^{\frac{2k+1}{k-1}}_{x}}\ldots\|J^{\frac{1}{2}-\frac{2}{2k+1}}w_{2k+1}\|_{L^{4k+2}_{t}L^{\frac{2k+1}{k-1}}_{x}}\\
&\lesssim \|J^{\frac{1}{2}-\frac{2}{2k+1}}w_1\|_{X^{0,\frac{1}{2}+}}\ldots\|J^{\frac{1}{2}-\frac{2}{2k+1}}w_{2k+1}\|_{X^{0,\frac{1}{2}+}}\\
&\lesssim \|w_1\|_{X^{s,\frac{1}{2}+}}\ldots\|w_{2k+1}\|_{X^{s,\frac{1}{2}+}}
\endaligned
\]
and
\[
\aligned
\|J^{s-1}w_1\cdot w_2\ldots &\cdot w_{2k+1}\|_{L^2_{t,x}}\\&\lesssim \|J^{s-1}w_1\|_{L^\infty_{t,x}}\|w_2\|_{L^{4k}_{t,x}}\ldots\|w_{2k+1}\|_{L^{4k}_{t,x}}\\
&\lesssim\|J^{s-1}w_1\|_{L^\infty_{t,x}}\|J^{\frac{1}{2}-\frac{1}{k}}w_2\|_{L^{4k}_{t}L^{\frac{4k}{2k-3}}_{x}}\ldots\|J^{\frac{1}{2}-\frac{1}{k}}w_{2k+1}\|_{L^{4k}_{t}L^{\frac{4k}{2k-3}}_{x}}\\
&\lesssim \|J^{s-1}w_1\|_{X^{\frac{1}{2}+,\frac{1}{2}+}}\|J^{\frac{1}{2}-\frac{1}{k}}w_2\|_{X^{0,\frac{1}{2}+}}\ldots\|J^{\frac{1}{2}-\frac{1}{k}}w_{2k+1}\|_{X^{0,\frac{1}{2}+}}\\
&\lesssim \|w_1\|_{X^{s,\frac{1}{2}+}}\|w_2\|_{X^{s,\frac{1}{2}+}}\ldots\|w_{2k+1}\|_{X^{s,\frac{1}{2}+}}.
\endaligned
\]
\end{proof}


\subsection{Estimates for the linear equation}

Here, we revisit bounds satisfied by solutions to the linear equation
\begin{equation}
w_{tt}-w_{xx}-\beta w_{xxxx}-w_{xxxxxx}\,=\,F, \quad w(0)\,=\,g,\quad w_t(0)\,=\,h_x,
\label{lin-eq}
\end{equation}
claimed in \cite{EF-12} to be derived in a similar way with the corresponding estimates for the classical linear Boussinesq equation
\[
w_{tt}-w_{xx}+w_{xxxx}\,=\,F,
\]
proven in \cite{F09}. In order to state them, we need to introduce the cutoff function $\eta=\eta(t)\in C^\infty_0(\R)$ satisfying $0\leq \eta\leq 1$ and 
\[
\eta(t)=
\begin{cases}
1, \quad \text{if}\ |t|\leq 1,\\
0, \quad \text{if}\ |t|\geq 2,\\
\end{cases}
\]
and we also let $\eta_\delta(t):=\eta(t/\delta)$ for $0<\delta$. For purposes of completeness, we present arguments with full details for these estimates.

First, we address the homogeneous equation (i.e., \eqref {lin-eq} with $F\equiv 0$).

\begin{prop}
For the IVP
\begin{equation*}
w_{tt}-w_{xx}-\beta w_{xxxx}-w_{xxxxxx}\,=\,0, \quad w(0)\,=\,g,\quad w_t(0)\,=\,h_x,
\end{equation*}
we have that
\begin{equation}
\|\eta w\|_{X^{\sigma,\theta}}+\|\eta (-\Delta)^{-\frac 12}w_t\|_{X^{\sigma-2,\theta}}\lesssim \|g\|_{H^\sigma}+\|h\|_{H^{\sigma-2}}
\label{lin-hom}
\end{equation}
holds true for all $\sigma$, $\theta\in \R$, with the implicit constant depending solely on $\eta$, $\sigma$, and $\theta$.
\end{prop}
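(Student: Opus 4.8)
The plan is to solve the initial value problem explicitly on the Fourier side and then transfer the resulting representation into the Bourgain norm by exploiting the smoothing effect of the cutoff $\eta$. First I would apply the spatial Fourier transform, which decouples the equation into the family of harmonic oscillators $\widehat{w}_{tt}(t,\xi)+\omega(\xi)^2\widehat{w}(t,\xi)=0$, since the symbol of $-\p_x^2-\beta\p_x^4-\p_x^6$ is precisely $\omega(\xi)^2=\xi^2-\beta\xi^4+\xi^6$. Imposing $\widehat{w}(0,\xi)=\widehat{g}(\xi)$ and $\widehat{w}_t(0,\xi)=i\xi\,\widehat{h}(\xi)$ gives the explicit solution
\[
\widehat{w}(t,\xi)=\cos(\omega t)\,\widehat{g}(\xi)+\frac{i\xi}{\omega}\sin(\omega t)\,\widehat{h}(\xi),
\]
which I would rewrite via $\cos(\omega t),\sin(\omega t)=\tfrac12(e^{i\omega t}\pm e^{-i\omega t}),\tfrac1{2i}(e^{i\omega t}-e^{-i\omega t})$ so as to split $w=w_++w_-$ with $\widehat{w_\pm}(t,\xi)=e^{\pm i\omega(\xi)t}c_\pm(\xi)$, where $c_\pm(\xi)=\tfrac12\widehat{g}(\xi)\pm\tfrac{\xi}{2\omega}\widehat{h}(\xi)$. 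As the $X^{\sigma,\theta}$ norm is subadditive, it suffices to bound $\eta w_+$ and $\eta w_-$ separately.

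The decisive mechanism is that multiplication by $\eta(t)$ merely smears the temporal frequency: the spacetime Fourier transform of $\eta w_\pm$ equals $\widehat{\eta}(\tau\mp\omega(\xi))\,c_\pm(\xi)$, so that
\[
\|\eta w_\pm\|_{X^{\sigma,\theta}}^2=\int_\R\langle\xi\rangle^{2\sigma}|c_\pm(\xi)|^2\left(\int_\R\langle|\tau|-\omega(\xi)\rangle^{2\theta}\,|\widehat{\eta}(\tau\mp\omega(\xi))|^2\,d\tau\right)d\xi.
\]
Since $\omega(\xi)\geq 0$ one checks directly that $\langle|\tau|-\omega\rangle\leq\langle\tau\mp\omega\rangle$, so after the substitution $\mu=\tau\mp\omega$ the inner integral is controlled by $\int_\R\langle\mu\rangle^{2\theta}|\widehat{\eta}(\mu)|^2\,d\mu$ when $\theta\geq0$, and trivially by $\|\widehat{\eta}\|_{L^2}^2$ when $\theta<0$ (using $\langle\cdot\rangle^{2\theta}\leq1$); either way it is a finite constant depending only on $\eta$ and $\theta$, because $\eta\in\mathcal S(\R)$. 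This collapses the bound to the purely spatial estimate $\int\langle\xi\rangle^{2\sigma}|c_\pm(\xi)|^2\,d\xi\lesssim\|g\|_{H^\sigma}^2+\|h\|_{H^{\sigma-2}}^2$. The companion term $\eta(-\Delta)^{-1/2}w_t$ is treated identically: differentiating the representation and applying $|\xi|^{-1}$ yields $\widehat{(-\Delta)^{-1/2}w_t}=-\tfrac{\omega}{|\xi|}\sin(\omega t)\widehat g+i\operatorname{sgn}(\xi)\cos(\omega t)\widehat h$, which again splits into $e^{\pm i\omega t}d_\pm(\xi)$ pieces and reduces, by the same $\tau$-integration, to $\int\langle\xi\rangle^{2(\sigma-2)}|d_\pm(\xi)|^2\,d\xi\lesssim\|g\|_{H^\sigma}^2+\|h\|_{H^{\sigma-2}}^2$.

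Everything then comes down to two symbol comparisons, which is where the only genuine care is required. For the $w$ estimate I must show $\langle\xi\rangle^{2\sigma}\tfrac{\xi^2}{\omega^2}\lesssim\langle\xi\rangle^{2(\sigma-2)}$, equivalently $\tfrac{\xi^2}{\omega^2}\langle\xi\rangle^4\lesssim1$; for the $w_t$ estimate I need the reverse weighting $\tfrac{\omega^2}{\xi^2}\langle\xi\rangle^{-4}\lesssim1$, together with $|\operatorname{sgn}(\xi)|\leq1$. Both follow from the factorization $\omega(\xi)^2=\xi^2\,(1-\beta\xi^2+\xi^4)$, which cancels the apparent singularity of $\xi/\omega$ at $\xi=0$ and shows $\omega^2/\xi^2=1-\beta\xi^2+\xi^4$ tends to a finite constant as $\xi\to0$ and behaves like $\xi^4\sim\langle\xi\rangle^4$ as $|\xi|\to\infty$. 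The one fact to verify is that the quartic $1-\beta\xi^2+\xi^4$ stays bounded away from zero; for $\beta=\pm1$ (indeed whenever $|\beta|<2$) its discriminant in $\xi^2$ is $\beta^2-4<0$, so it has no real roots and is a continuous, strictly positive function with finite positive limits at $0$ and $\infty$, whence $\omega^2/\xi^2\sim\langle\xi\rangle^4$ uniformly. This low-frequency regularity and the uniform two-sided comparison of the dispersion symbol with $\langle\xi\rangle^2$ constitute the main (and rather mild) obstacle; once they are in hand, assembling the two spatial bounds and summing over $\pm$ yields \eqref{lin-hom}.
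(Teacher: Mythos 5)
Your proof is correct and takes essentially the same route as the paper's: the explicit half-wave decomposition $\widetilde{\eta w_\pm}(\tau,\xi)=\widehat{\eta}(\tau\mp\omega(\xi))\,c_\pm(\xi)$, the modulation inequality $||\tau|-\omega(\xi)|\leq |\tau\mp\omega(\xi)|$, and the symbol comparison $\omega(\xi)\simeq |\xi|\langle\xi\rangle^2$ are precisely the paper's three ingredients. Your discriminant argument for $1-\beta\xi^2+\xi^4\sim\langle\xi\rangle^4$ and your separate treatment of $\theta<0$ merely spell out details the paper labels as easily derived.
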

\begin{proof}
The proof follows the blueprint of the one for Lemma 2.1 in \cite{F09} and we emphasize here the main steps. First, direct computations using the Fourier transform yield
\begin{equation*}
\widetilde{\eta w}(\tau,\xi)= \frac{\widehat{\eta}(\tau-\omega(\xi))}{2}\left(\widehat{g}(\xi)+\frac{\xi\widehat{h}(\xi)}{\omega(\xi)}\right)+\frac{\widehat{\eta}(\tau+\omega(\xi))}{2}\left(\widehat{g}(\xi)-\frac{\xi\widehat{h}(\xi)}{\omega(\xi)}\right)
\end{equation*}
and
\begin{equation*}
\aligned
\widetilde{\eta (-\Delta)^{-\frac 12}w_t}(\tau,\xi)=\, &\frac{\widehat{\eta}(\tau+\omega(\xi))}{2i}\left(\frac{\omega(\xi)\widehat{g}(\xi)}{|\xi|}-\frac{\xi\widehat{h}(\xi)}{|\xi|}\right)\\&-\frac{\widehat{\eta}(\tau-\omega(\xi))}{2i}\left(\frac{\omega(\xi)\widehat{g}(\xi)}{|\xi|}+\frac{\xi\widehat{h}(\xi)}{|\xi|}\right).
\endaligned
\end{equation*}
Next, if we rely on the definition of the $X^{\sigma,\theta}$ norm, the fact that $\eta\in C^\infty_0$, $\omega(\xi)\geq 0$, and 
\begin{equation}
||\tau|-\omega(\xi)|\leq\min\{|\tau-\omega(\xi)|, |\tau+\omega(\xi)|\},
\label{mod}
\end{equation}
then we deduce
\begin{equation*}
\|\eta w\|_{X^{\sigma,\theta}}\lesssim \|\langle \xi\rangle^\sigma\widehat{g}(\xi)\|_{L^2_\xi}+\left\|\langle \xi\rangle^\sigma\frac{\xi\widehat{h}(\xi)}{\omega(\xi)}\right\|_{L^2_\xi}
\end{equation*}
and
\begin{equation*}
\|\eta (-\Delta)^{-\frac 12}w_t\|_{X^{\sigma-2,\theta}}\lesssim \left\|\langle \xi\rangle^{\sigma-2}\frac{\omega(\xi)\widehat{g}(\xi)}{|\xi|}\right\|_{L^2_\xi}+\left\|\langle \xi\rangle^{\sigma-2}\widehat{h}(\xi)\right\|_{L^2_\xi}.
\end{equation*}
Finally, if we take into account the easily-derived approximation
\begin{equation}
\omega(\xi)\simeq |\xi|\langle\xi\rangle^{2},
\label{symbol}
\end{equation}
we reach the desired conclusion.
\end{proof}
\begin{remark}
The corresponding estimate written in \cite{EF-12} has on the right-hand side the larger norm $\|h\|_{H^{\sigma-1}}$, instead of $\|h\|_{H^{\sigma-2}}$.
\end{remark}

The second result of this subsection concerns the inhomogeneous equation with zero data (i.e., \eqref{lin-eq} with $g=h\equiv 0$).

\begin{prop}
For the IVP
\begin{equation*}
w_{tt}-w_{xx}-\beta w_{xxxx}-w_{xxxxxx}\,=\,F, \quad w(0)\,=\,w_t(0)\,=\,0,
\end{equation*}
the estimate
\begin{equation}
\|\eta_\delta w\|_{X^{\sigma,\theta_1}}+\|\eta_\delta (-\Delta)^{-\frac 12}w_t\|_{X^{\sigma-2,\theta_1}}\lesssim \delta^{1-\theta_1+\theta_2}\|(-\Delta)^{-\frac 12}F\|_{X^{\sigma-2,\theta_2}}
\label{lin-inhom}
\end{equation}
is valid for all $0<\delta\leq 1$, $\sigma\in\R$, and $-1/2<\theta_2\leq 0\leq \theta_1\leq \theta_2+1$, with the implicit constant depending solely on $\eta$, $\delta$, $\sigma$, $\theta_1$, and $\theta_2$.
\end{prop}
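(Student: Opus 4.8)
The plan is to solve the Cauchy problem explicitly via Duhamel's formula and then reduce the estimate to a standard energy bound in Bourgain spaces adapted to the dispersion relation $\omega$. Taking the spatial Fourier transform, the equation becomes $\widehat{w}_{tt}(t,\xi)+\omega(\xi)^2\,\widehat{w}(t,\xi)=\widehat{F}(t,\xi)$ with zero data, so that
\[
\widehat{w}(t,\xi)=\int_0^t \frac{\sin\big((t-t')\omega(\xi)\big)}{\omega(\xi)}\,\widehat{F}(t',\xi)\,dt',\qquad \widehat{w}_t(t,\xi)=\int_0^t \cos\big((t-t')\omega(\xi)\big)\,\widehat{F}(t',\xi)\,dt',
\]
where in the second identity the boundary term at $t'=t$ vanishes. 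Writing $\sin$ and $\cos$ as combinations of $e^{\pm it\omega(\xi)}$, I would express both $\eta_\delta w$ and $\eta_\delta(-\Delta)^{-1/2}w_t$ as linear combinations of the one-sided Duhamel integrals $\eta_\delta(t)\int_0^t e^{\pm i(t-t')\omega(D)}G(t')\,dt'$, where $\omega(D)$ denotes the Fourier multiplier with symbol $\omega(\xi)$, the source being $G=\omega(D)^{-1}F$ in the first case and $G=(-\Delta)^{-1/2}F$ in the second.

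The core of the argument is the classical energy estimate for such integrals (e.g., Lemma 2.12 in Tao \cite{T-06}), applied here with the dispersion relation $\pm\omega(\xi)$: for $-1/2<\theta_2\le 0\le\theta_1\le\theta_2+1$ and $0<\delta\le 1$ one has
\[
\Big\|\eta_\delta(t)\int_0^t e^{\pm i(t-t')\omega(D)}G(t')\,dt'\Big\|_{X^{s,\theta_1}_{\tau=\pm\omega}}\lesssim \delta^{1-\theta_1+\theta_2}\,\|G\|_{X^{s,\theta_2}_{\tau=\pm\omega}},
\]
where the norm $X^{s,\theta}_{\tau=\pm\omega}$ carries the weight $\langle\tau\mp\omega(\xi)\rangle^\theta$; this is exactly the stated range of exponents and yields the claimed power of $\delta$. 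To pass between these one-sided norms and $X^{s,\theta}$, I would use \eqref{mod}: since $\omega\ge 0$, we have $\langle|\tau|-\omega(\xi)\rangle\le\langle\tau\mp\omega(\xi)\rangle$, so the sign conditions make both comparisons favorable --- because $\theta_1\ge 0$ the target $X^{\sigma,\theta_1}$ is controlled by $X^{\sigma,\theta_1}_{\tau=\pm\omega}$, while because $\theta_2\le 0$ the source obeys $\|G\|_{X^{s,\theta_2}_{\tau=\pm\omega}}\le\|G\|_{X^{s,\theta_2}}$.

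It then remains to reconcile the Sobolev weights via \eqref{symbol}. Since $\langle\xi\rangle^{\sigma}/\omega(\xi)\simeq\langle\xi\rangle^{\sigma-2}/|\xi|$, one gets $\|\omega(D)^{-1}F\|_{X^{\sigma,\theta_2}}\simeq\|(-\Delta)^{-1/2}F\|_{X^{\sigma-2,\theta_2}}$, which disposes of the $w$ term; for the $w_t$ term the source is already $(-\Delta)^{-1/2}F$ and the target weight $\langle\xi\rangle^{\sigma-2}$ matches directly, so summing the two contributions gives \eqref{lin-inhom}. I expect the main obstacle to be the energy estimate itself, whose proof requires the usual careful frequency decomposition of the non-translation-invariant operator $\int_0^t$ in order to obtain the sharp $\delta$-power uniformly across the full range $-1/2<\theta_2\le0\le\theta_1\le\theta_2+1$; this is why I would prefer to quote it as a black box. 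A secondary point to check is the apparent low-frequency singularity of $\omega(D)^{-1}$ and $(-\Delta)^{-1/2}$ at $\xi=0$: this is harmless because finiteness of the right-hand side already forces $(-\Delta)^{-1/2}F$, hence $\omega(D)^{-1}F$, to be well defined, so the splitting into one-sided pieces is legitimate.
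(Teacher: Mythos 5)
Your proposal is correct and takes essentially the same route as the paper's proof: Duhamel's formula with the half-wave decomposition into $e^{\pm it\omega(D)}$ pieces, a standard time-localization estimate producing the factor $\delta^{1-\theta_1+\theta_2}$ on the stated exponent range, the modulation comparison \eqref{mod} exploited through the sign conditions $\theta_1\geq 0$ and $\theta_2\leq 0$, and the symbol approximation \eqref{symbol} to trade $\omega(D)^{-1}$ for $(-\Delta)^{-1/2}\langle D\rangle^{-2}$. The only difference is cosmetic: you quote the operator-level Bourgain-space energy estimate as a black box, while the paper pulls the phases $e^{\pm it\omega(\xi)}$ out explicitly and applies, for each fixed $\xi$, the equivalent scalar bound $\left\|\eta_\delta(t)\int_0^t f(t')\,dt'\right\|_{H^{\theta_1}_t}\lesssim\delta^{1-\theta_1+\theta_2}\|f\|_{H^{\theta_2}_t}$, which is precisely how that black box is proved.
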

\begin{proof}
The argument is similar in structure to the one for Lemma 2.2 in \cite{F09} and starts by working with Duhamel's formula to derive
\begin{equation*}\aligned
&\eta_\delta(t) \widehat{w}(t,\xi)=e^{it\omega(\xi)}\widehat{U_+}(t,\xi)-e^{-it\omega(\xi)}\widehat{U_-}(t,\xi),\\
\eta_\delta(t) &\widehat{(-\Delta)^{-\frac 12}w_t}(t,\xi)=e^{it\omega(\xi)}\widehat{V_+}(t,\xi)+e^{-it\omega(\xi)}\widehat{V_-}(t,\xi),
\endaligned
\end{equation*}
where $U_\pm=U_\pm(t,x)$ and $V_\pm=V_\pm(t,x)$ are defined through their spatial Fourier transform according to
\begin{equation*}\aligned
\widehat{U_\pm}(t,\xi)=\eta_\delta(t) \int_0^t e^{\mp it'\omega(\xi)}\frac{\widehat{F}(t',\xi)}{2i\omega(\xi)}\,dt',\qquad\widehat{V_\pm}(t,\xi)=\eta_\delta(t) \int_0^t e^{\mp it'\omega(\xi)}\frac{\widehat{F}(t',\xi)}{2|\xi|}\,dt'.
\endaligned
\end{equation*}
Next, on the basis of the definition of the $X^{\sigma,\theta}$ norm, $\theta_1\geq 0$, $\omega(\xi)\geq 0$, and 
\begin{equation*}
\max\{||\tau+\omega(\xi)|-\omega(\xi)|, ||\tau-\omega(\xi)|-\omega(\xi)|\}\leq |\tau|,
\end{equation*}
we infer that 
\begin{equation*}
\aligned
\|\eta_\delta w\|_{X^{\sigma,\theta_1}}&\lesssim \|\langle \xi\rangle^\sigma\langle \tau\rangle^{\theta_1}\widetilde{U_+}(\tau,\xi)\|_{L^2_{\tau,\xi}}+\|\langle \xi\rangle^\sigma\langle \tau\rangle^{\theta_1}\widetilde{U_-}(\tau,\xi)\|_{L^2_{\tau,\xi}}\\
&\lesssim \|\langle \xi\rangle^\sigma\|\widehat{U_+}(\cdot,\xi)\|_{H^{\theta_1}_t}\|_{L^2_{\xi}}+\|\langle \xi\rangle^\sigma\|\widehat{U_-}(\cdot,\xi)\|_{H^{\theta_1}_t}\|_{L^2_{\xi}}
\endaligned
\end{equation*}
and
\begin{equation*}
\aligned
\|\eta_\delta (-\Delta)^{-\frac 12}w_t\|_{X^{\sigma-2,\theta_1}}&\lesssim \|\langle \xi\rangle^{\sigma-2}\langle \tau\rangle^{\theta_1}\widetilde{V_+}(\tau,\xi)\|_{L^2_{\tau,\xi}}+\|\langle \xi\rangle^{\sigma-2}\langle \tau\rangle^{\theta_1}\widetilde{V_-}(\tau,\xi)\|_{L^2_{\tau,\xi}}\\
&\lesssim \|\langle \xi\rangle^{\sigma-2}\|\widehat{V_+}(\cdot,\xi)\|_{H^{\theta_1}_t}\|_{L^2_{\xi}}+\|\langle \xi\rangle^{\sigma-2}\|\widehat{V_-}(\cdot,\xi)\|_{H^{\theta_1}_t}\|_{L^2_{\xi}}.
\endaligned
\end{equation*}
Following this, we deal with the inner Sobolev norms above by applying an estimate also used in the proof of Lemma 2.2 in \cite{F09}, which takes the form
\begin{equation*}
\left\| \eta_\delta(t) \int_0^t f(t')\,dt'\right\|_{H^{\theta_1}_t}\lesssim  \delta^{1-\theta_1+\theta_2} \|f\|_{H^{\theta_2}_t},
\end{equation*}
with $\delta$, $\theta_1$, and $\theta_2$ satisfying the hypothesis of our proposition. Thus, we obtain
\begin{equation*}
\aligned
\|\eta&_\delta  w\|_{X^{\sigma,\theta_1}}\\
&\lesssim  \delta^{1-\theta_1+\theta_2}\left\{\left\|\langle \xi\rangle^\sigma\left\|e^{- it\omega(\xi)}\frac{\widehat{F}(t,\xi)}{\omega(\xi)}\right\|_{H^{\theta_2}_t}\right\|_{L^2_{\xi}}+\left\|\langle \xi\rangle^\sigma\left\|e^{it\omega(\xi)}\frac{\widehat{F}(t,\xi)}{\omega(\xi)}\right\|_{H^{\theta_2}_t}\right\|_{L^2_{\xi}}\right\}\\
&\simeq  \delta^{1-\theta_1+\theta_2}\left\{\left\|\frac{\langle\xi\rangle^\sigma}{\omega(\xi)}\langle\tau-\omega(\xi)\rangle^{\theta_2}\widetilde{F}(\tau,\xi)\right\|_{L^2_{\tau,\xi}}+\left\|\frac{\langle\xi\rangle^\sigma}{\omega(\xi)}\langle\tau+\omega(\xi)\rangle^{\theta_2}\widetilde{F}(\tau,\xi)\right\|_{L^2_{\tau,\xi}}\right\}
\endaligned
\end{equation*}
and
\begin{equation*}
\aligned
\|&\eta_\delta (-\Delta)^{-\frac 12}w_t\|_{X^{\sigma-2,\theta_1}}\\
&\lesssim  \delta^{1-\theta_1+\theta_2}\left\{\left\|\langle \xi\rangle^{\sigma-2}\left\|e^{- it\omega(\xi)}\frac{\widehat{F}(t,\xi)}{|\xi|}\right\|_{H^{\theta_2}_t}\right\|_{L^2_{\xi}}+\left\|\langle \xi\rangle^{\sigma-2}\left\|e^{it\omega(\xi)}\frac{\widehat{F}(t,\xi)}{|\xi|}\right\|_{H^{\theta_2}_t}\right\|_{L^2_{\xi}}\right\}\\
&\simeq  \delta^{1-\theta_1+\theta_2}\left\{\left\|\frac{\langle\xi\rangle^{\sigma-2}}{|\xi|}\langle\tau-\omega(\xi)\rangle^{\theta_2}\widetilde{F}(\tau,\xi)\right\|_{L^2_{\tau,\xi}}+\left\|\frac{\langle\xi\rangle^{\sigma-2}}{|\xi|}\langle\tau+\omega(\xi)\rangle^{\theta_2}\widetilde{F}(\tau,\xi)\right\|_{L^2_{\tau,\xi}}\right\}.
\endaligned
\end{equation*}
The argument is concluded by taking advantage of \eqref{mod}, $\theta_2\leq 0$, and \eqref{symbol}.
\end{proof}


\subsection{Basic elements of the \textit{I-method}}

We follow the exposition in Colliander-Keel-Staffilani-Takaoka-Tao \cite{CKSTT-04} and introduce the smooth, even Fourier multiplier $m:\R\to\R^+$ given by
\begin{equation}
m(\xi)=\begin{cases}
1, \quad &\text{if}\ |\xi|\leq 1,\\
|\xi|^{-1}, \quad &\text{if}\ |\xi|\geq 2,
\end{cases}
\label{m-def}
\end{equation}
which permits us to define the family of multiplier operators $(I^\sigma_N)_{\sigma\geq 0, N\geq 1}$ according to 
\begin{equation}
\widehat{I^\sigma_N v}(\xi):=m^{\sigma}\left(\frac{\xi}{N}\right)\widehat{v}(\xi).
\label{IsN-def}
\end{equation}
It is straightforward to verify that $I^\sigma_N$ is a smoothing operator of order $\sigma$, in the sense that
\begin{equation}
\|v\|_{H^{\tilde \sigma}}\lesssim \|I^\sigma_N v\|_{H^{\tilde{\sigma}+\sigma}}\lesssim N^\sigma\|v\|_{H^{\tilde \sigma}}.
\label{I-smooth}
\end{equation}
Next, we recall an interpolation result (Lemma 12.1 in \cite{CKSTT-04}) which yields multilinear estimates related to this family of operators.

\begin{lemma}
Let $\sigma_0>0$ and $n\geq 1$. Suppose that $Z$, $X_1,\ldots, X_n$ are translation invariant Banach spaces and T is a translation invariant n-linear operator such that one has the estimate
\[
\|I_1^\sigma T(u_1, \ldots, u_n)\|_Z\lesssim \prod_{i=1}^n \|I_1^\sigma u_i\|_{X_i}
\]
for all $u_1,\ldots, u_n$ and all $0\leq \sigma\leq\sigma_0$. Then one has the estimate
\[
\|I_N^\sigma T(u_1, \ldots, u_n)\|_Z\lesssim \prod_{i=1}^n \|I_N^\sigma u_i\|_{X_i}
\]
for all $u_1,\ldots, u_n$, all $0\leq \sigma\leq\sigma_0$, and $N\geq 1$, with the implicit constant independent of $N$. 
\end{lemma}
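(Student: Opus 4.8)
The plan is to deduce the $N$-dependent estimate from the $N=1$ hypothesis by writing the $I_N^\sigma$-conjugated operator as a \emph{superposition of translates} of the $N=1$ operator. First I would set $v_i := I_N^\sigma u_i$, so that (working with Schwartz inputs and extending by density, since $I_N^{-\sigma}$ is unbounded) the claimed bound becomes the uniform-in-$N$ boundedness, from $\prod_i X_i$ to $Z$, of the $n$-linear operator
\[
S_N^\sigma(v_1,\dots,v_n):=I_N^\sigma\,T\big(I_N^{-\sigma}v_1,\dots,I_N^{-\sigma}v_n\big).
\]
By hypothesis $S_1^\sigma$ is bounded for every $0\le\sigma\le\sigma_0$, and on the Fourier side $S_N^\sigma$ is obtained from $T$ by inserting the multiplier
\[
M_N^\sigma(\xi_1,\dots,\xi_n)=m^\sigma\!\Big(\tfrac{\xi_1+\dots+\xi_n}{N}\Big)\prod_{i=1}^n m^{-\sigma}\!\Big(\tfrac{\xi_i}{N}\Big)=M_1^\sigma\big(\tfrac{\xi_1}{N},\dots,\tfrac{\xi_n}{N}\big),
\]
so the whole difficulty is the transfer of a multiplier bound under the dilation $\vec\xi\mapsto\vec\xi/N$ on spaces that are only translation-invariant.

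The key step is to factor $M_N^\sigma=K_N^\sigma\cdot M_1^\sigma$ and to show that the ratio $K_N^\sigma=M_N^\sigma/M_1^\sigma$ is the Fourier transform of a finite measure $\mu_N^\sigma$ on $\R^n$ whose total variation is bounded uniformly in $N\ge1$ and $\sigma\in[0,\sigma_0]$. Granting this, distributing the exponentials $e^{i\,\xi_i y_i}$ onto the inputs identifies $S_N^\sigma$ with the average
\[
S_N^\sigma(v_1,\dots,v_n)=\int_{\R^n} S_1^\sigma\big(\tau_{y_1}v_1,\dots,\tau_{y_n}v_n\big)\,d\mu_N^\sigma(\vec y),
\]
where $\tau_y$ denotes translation. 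Minkowski's inequality, the translation-invariance of $Z$ and the $X_i$ (so that $\|\tau_{y_i}v_i\|_{X_i}=\|v_i\|_{X_i}$), and the $N=1$ hypothesis then yield $\|S_N^\sigma(v)\|_Z\lesssim |\mu_N^\sigma|(\R^n)\prod_i\|v_i\|_{X_i}$, which is the desired bound with an $N$-independent constant.

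The main obstacle is exactly the uniform bound on $|\mu_N^\sigma|(\R^n)$, i.e. on the Wiener-algebra norm of $K_N^\sigma$. Here one cannot simply split $K_N^\sigma$ into its one-variable factors $m^\sigma(\tfrac{\xi_1+\dots+\xi_n}{N})/m^\sigma(\xi_1+\dots+\xi_n)$ and $m^\sigma(\xi_i)/m^\sigma(\xi_i/N)$, since each of these can grow like $N^\sigma$; the boundedness of $K_N^\sigma$ is a genuine \emph{cancellation} between the sum-frequency factor and the individual-frequency factors, as one checks on the configurations where the $\xi_i$ reinforce or cancel. I would therefore decompose frequency space dyadically according to the sizes of the $|\xi_i|$ and of $|\xi_1+\dots+\xi_n|$ relative to the scale $N$; on each piece $K_N^\sigma$ is genuinely bounded, and because every derivative falling on $m^\sigma(\cdot/N)$ produces a gain of $N^{-1}$ together with the smoothness of $m$ from \eqref{m-def}, one gets Mikhlin--H\"ormander-type bounds that are uniform in $N$ and summable over the dyadic pieces, giving the required control of the measure mass. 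Finally, since the $\sigma=0$ endpoint is trivial ($I_N^0=\mathrm{Id}$) and the estimate is needed only for $0\le\sigma\le\sigma_0$, one may streamline the range by analytically interpolating the family $z\mapsto S_N^{z}$ between $\mathrm{Re}\,z=0$ and $\mathrm{Re}\,z=\sigma_0$, reducing the measure-mass bound to the single endpoint $\sigma=\sigma_0$.
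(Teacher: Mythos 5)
The paper does not actually prove this statement: it is imported verbatim as Lemma 12.1 of \cite{CKSTT-04}, so the relevant comparison is with the argument given there. Your transference framework is sound as far as it goes: substituting $v_i=I_N^\sigma u_i$, writing $S_N^\sigma=I_N^\sigma T(I_N^{-\sigma}\cdot,\ldots,I_N^{-\sigma}\cdot)$ as an average of translates of $S_1^\sigma$ against a measure $\mu_N^\sigma$ with $\widehat{\mu_N^\sigma}=K_N^\sigma=M_N^\sigma/M_1^\sigma$, and closing with Minkowski plus translation invariance correctly reduces everything to a total-variation bound for $\mu_N^\sigma$ uniform in $N$. The gap is that your proposed proof of that bound fails at exactly the cancellation you yourself flag. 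Take $n=2$, $|\xi_2|\le 1$, $|\xi_1|\sim 2^j$ with $1\ll 2^j\ll N$: there $K_N^\sigma=\big(m(\xi_1+\xi_2)/m(\xi_1)\big)^\sigma\approx 1$, with \emph{no decay in $j$}. Each dyadic piece therefore has Wiener-algebra norm $\gtrsim 1$ (the norm dominates the sup), and there are about $\log N$ such pieces, so summing the pieces' norms in absolute value — which is what a decomposition argument does — yields a bound diverging like $\log N$ rather than a uniform constant. The union of these pieces does have $O(1)$ Wiener norm, but only because the leading part of the symbol recombines into a single dilated cutoff; to see this one must subtract the limiting symbol and telescope (or organize the decomposition multiplicatively), an idea absent from your sketch. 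Two further soft spots: Mikhlin--H\"ormander conditions give $L^p$-multiplier bounds, which are useless for an abstract translation-invariant Banach space (you need $L^1$ control of the inverse Fourier transform of each localized symbol, e.g.\ by a Bernstein-type argument); and the closing appeal to Stein interpolation is both unnecessary (the reduction to $\sigma=\sigma_0$ is trivial, since the hypothesis for the range $[0,\sigma]$ lets you rename $\sigma_0:=\sigma$) and delicate, because on the line $\mathrm{Re}\,z=0$ the symbols involve imaginary powers of $m$, whose Wiener norms are not obviously controlled.

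For contrast, the proof in \cite{CKSTT-04} avoids multi-dimensional symbol analysis entirely, and in doing so explains why the hypothesis is assumed for the whole range $0\le\sigma\le\sigma_0$ rather than at a single $\sigma$. After reducing to $\sigma=\sigma_0$, one splits each input with a Littlewood--Paley cutoff at frequency $N$ and expands $T$ multilinearly into $2^n$ pieces. For the all-low piece, $m^{\sigma_0}(\xi/N)$ restricted to $|\xi|\lesssim N$ is a dilate of a fixed compactly supported smooth function, hence acts on any translation-invariant space with an $N$-independent constant, and the estimate closes using the hypothesis at $\sigma=0$ (boundedness of $T$ itself). For any piece containing a high-frequency input, one exploits the exact identity $m^{\sigma_0}(\xi/N)=N^{\sigma_0}m^{\sigma_0}(\xi)$ for $|\xi|\ge 2N$: converting the output multiplier from $I_1^{\sigma_0}$ to $I_N^{\sigma_0}$ costs a factor $N^{\sigma_0}$, the single high input repays exactly $N^{-\sigma_0}$, and the hypothesis at $\sigma=\sigma_0$ finishes; every multiplier fact used is one-dimensional and uniform in $N$ by dilation invariance of the $\mathcal{F}L^1$ norm. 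Your approach could be repaired either by that low/high case structure or by the telescoping indicated above, but as written the crux step — the uniform mass bound on $\mu_N^\sigma$ — is not established.
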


In what follows, we will be mainly working with $I_N^{2-s}$, where $s\leq 2$. Our goal is to prove
\begin{equation}
\|I_N^{2-s}(|w|^{2k}w)\|_{X^{1,0}}\lesssim \|I_N^{2-s}w\|^{2k+1}_{X^{2,\frac{1}{2}+}}, \qquad k\geq 2,\quad \frac{1}{2}-\frac{2}{2k+1}\leq s \leq 2,
\label{multi-I}
\end{equation}
an important bound to be relied on in the next section. This is the consequence of the following multilinear estimate.

\begin{lemma} 
Let $k\geq 2$ and $1/2-2/(2k+1)\leq s \leq 2$. Then
\begin{equation}
\|I_N^{2-s}(w_1\cdot\ldots \cdot w_{2k+1})\|_{X^{1,0}}\lesssim \|I_N^{2-s}w_1\|_{X^{2,\frac{1}{2}+}}\ldots\|I_N^{2-s}w_{2k+1}\|_{X^{2,\frac{1}{2}+}}
\label{multilin-I}
\end{equation}
holds true.
\end{lemma}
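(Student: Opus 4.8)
The plan is to deduce \eqref{multilin-I} directly from the interpolation lemma stated just above, together with the already-established multilinear bound \eqref{prod-xst}, thereby eliminating the parameter $N$ at the outset. Concretely, I would apply that lemma to the translation-invariant $(2k+1)$-linear operator $T(w_1,\dots,w_{2k+1}):=w_1\cdots w_{2k+1}$, taking as target space $Z=X^{1,0}$, as input spaces $X_i=X^{2,\frac12+}$, and as threshold $\sigma_0:=\tfrac32+\tfrac{2}{2k+1}$. Since the hypothesis $s\geq\tfrac12-\tfrac{2}{2k+1}$ forces $2-s\leq\sigma_0$, while $s\leq 2$ gives $2-s\geq 0$, the value $\sigma=2-s$ lies in $[0,\sigma_0]$. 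Hence it suffices to verify the $N=1$ version of \eqref{multilin-I} for \emph{every} $\sigma\in[0,\sigma_0]$ and then specialize the lemma's conclusion to $\sigma=2-s$.

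For the $N=1$ case, the key observation is that $I_1^\sigma$ acts as an exact smoothing of order $\sigma$ at the level of the Bourgain norms. From \eqref{m-def}--\eqref{IsN-def} one checks the pointwise comparison $\langle\xi\rangle^a m^\sigma(\xi)\sim\langle\xi\rangle^{a-\sigma}$ for every $a\in\R$, with constants uniform over the compact range $\sigma\in[0,\sigma_0]$; since $m$ depends on $\xi$ only, this yields
\[
\|I_1^\sigma v\|_{X^{a,\theta}}\sim\|v\|_{X^{a-\sigma,\theta}}
\]
for all $a,\theta\in\R$ (this is a refinement of \eqref{I-smooth} keeping track of the modulation weight). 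Applying the equivalence with $(a,\theta)=(1,0)$ on the left and with $(a,\theta)=(2,\tfrac12+)$ on each factor on the right, the $N=1$ estimate reduces to
\[
\|w_1\cdots w_{2k+1}\|_{X^{1-\sigma,0}}\lesssim\|w_1\|_{X^{2-\sigma,\frac12+}}\cdots\|w_{2k+1}\|_{X^{2-\sigma,\frac12+}}.
\]
Writing $\tilde s:=2-\sigma$, this is precisely \eqref{prod-xst} with $s$ replaced by $\tilde s$, which is available because $\tilde s=2-\sigma\geq 2-\sigma_0=\tfrac12-\tfrac{2}{2k+1}$ holds throughout the range. This closes the $N=1$ case for all $\sigma\in[0,\sigma_0]$, and the interpolation lemma then upgrades it to the claimed bound for all $N\geq 1$.

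The substantive content has therefore already been absorbed into the corollary \eqref{prod-xst}, so what remains is mainly bookkeeping. The one point I would scrutinize most carefully is the uniformity of the implicit constants over $\sigma\in[0,\sigma_0]$, which is what the interpolation lemma requires: both the multiplier comparison (which can degenerate only on the band $1\leq|\xi|\leq 2$, where $m$ is smooth and bounded away from $0$, so $m^\sigma\in[c^{\sigma_0},1]$) and the constant produced by \eqref{prod-xst} (which depends continuously on $\tilde s$ ranging over the compact interval $[\tfrac12-\tfrac{2}{2k+1},2]$) must be bounded independently of $\sigma$. Granting this uniformity, specializing the resulting $I_N^\sigma$ estimate to $\sigma=2-s$ gives exactly \eqref{multilin-I}.
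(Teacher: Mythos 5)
Your proposal is correct and follows essentially the same route as the paper: reduce to the $N=1$ case via the interpolation lemma, identify $I_1^\sigma$ with $J^{-\sigma}$ at the level of Fourier multipliers, and shift the Sobolev indices so that the claim becomes exactly the already-proven bound \eqref{prod-xst}. Your version is simply more explicit about two points the paper leaves implicit --- that the $N=1$ hypothesis must be verified for the whole range $\sigma\in[0,\sigma_0]$ (not just $\sigma=2-s$), and that the implicit constants must be uniform in $\sigma$ --- both of which you handle correctly.
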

\begin{proof}
According to the interpolation lemma, the claim is valid if we prove
\begin{equation*}
\|I_1^{2-s}(w_1\cdot\ldots \cdot w_{2k+1})\|_{X^{1,0}}\lesssim \|I_1^{2-s}w_1\|_{X^{2,\frac{1}{2}+}}\ldots\|I_1^{2-s}w_{2k+1}\|_{X^{2,\frac{1}{2}+}}
\end{equation*}
under the same restrictions for $k$ and $s$. However, based on the definition of $m$, we can work with $I_1^{2-s}\simeq J^{s-2}$ and, hence, the previous bound can be restated as 
\begin{equation*}
\|J^{s-2}(w_1\cdot\ldots \cdot w_{2k+1})\|_{X^{1,0}}\lesssim \|J^{s-2}w_1\|_{X^{2,\frac{1}{2}+}}\ldots\|J^{s-2}w_{2k+1}\|_{X^{2,\frac{1}{2}+}}.
\end{equation*}
Since $\|J^{s-2}w\|_{X^{\sigma,\theta}}=\|w\|_{X^{s+\sigma-2,\theta}}$, this translates into
\[
\|w_1\cdot\ldots \cdot w_{2k+1}\|_{X^{s-1,0}}\lesssim \|w_1\|_{X^{s,\frac{1}{2}+}}\ldots\|w_{2k+1}\|_{X^{s,\frac{1}{2}+}},
\]
which is the estimate \eqref{prod-xst} proven before.
\end{proof}


\section{Adapted local well-posedness theory} \label{LWP-th}

The fundamental idea behind the \textit{I-method} is that it treats equations having rough data by means of similar equations with smoothed out data, which are obtained, in turn, with the help of the multiplier operators introduced before. Precisely, due to \eqref{I-smooth}, we know that if $u$ solves the IVP \eqref{main} on the time interval $[0,\delta]$ with $(g,h)\in H^s(\R)\times H^{s-2}(\R)$  and $s<2$, then $I_N^{2-s}$, which is renamed onward $Iu$ to simplify notation, solves\footnote{Given that $I$ is a multiplier operator, it commutes with any derivative, either in $t$ or in $x$.}
\begin{equation}
\begin{cases}
(Iu)_{tt}-(Iu)_{xx}-\beta (Iu)_{xxxx}-(Iu)_{xxxxxx}\,=\,(I(f(u)))_{xx}, \\
Iu(0,x)\,=\,Ig(x),\qquad (Iu)_t(0,x)\,=\,(Ih)_x(x),\\
\end{cases}
\label{main-I}
\end{equation}
on the same time interval with $(Ig,Ih)\in H^2(\R)\times L^{2}(\R)$ and vice versa. The tools developed in the previous section allow us to obtain a LWP result for the smoothed out IVP. As mentioned in the introduction, the absence of scaling invariance for generalized sixth-order Boussinesq equations creates the extra task of deriving independently asymptotics on the size of the interval of existence associated to \eqref{main-I}.

\begin{theorem}\label{LWP-I}
Assume that $k\geq 2$ is an integer and $(g,h)\in H^s(\R)\times H^{s-2}(\R)$ with $1/2-2/(2k+1)\leq s<2$. There exists $0<\delta<1$ such that the IVP \eqref{main-I} with $f(u)=|u|^{2k}u$ admits a unique solution $Iu\in C([0,\delta]; H^2(\R))$ satisfying 
\begin{equation}
\|Iu\|_{X^{2,\frac{1}{2}+}_\delta}+\|(-\Delta)^{-1/2}Iu_t\|_{X^{0,\frac{1}{2}+}_\delta}\lesssim \|Ig\|_{H^2(\R)}+\|Ih\|_{L^2(\R)}
\label{Id-est}
\end{equation}
and 
\begin{equation}
\delta^{\frac 12 -}\lesssim \frac{1}{(\|Ig\|_{H^2(\R)}+\|Ih\|_{L^2(\R)})^{2k}}.
\label{d-est}
\end{equation}
In particular, the maximal time of existence can be approximated by writing $\simeq$ in place of $\lesssim$ in the previous estimate. 
\end{theorem}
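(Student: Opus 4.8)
The plan is to set up a standard contraction-mapping (Picard) fixed-point argument for the smoothed-out equation \eqref{main-I} in the Bourgain space $X^{2,\frac{1}{2}+}_\delta$, using the linear estimates \eqref{lin-hom} and \eqref{lin-inhom} together with the multilinear bound \eqref{multi-I} as the only nonlinear input. First I would rewrite \eqref{main-I} via Duhamel's formula, splitting the solution into the homogeneous evolution of the data $(Ig,Ih)$ plus the inhomogeneous term driven by $(I(f(u)))_{xx}$. Because the forcing carries two spatial derivatives outside and the linear inhomogeneous estimate \eqref{lin-inhom} is stated for $(-\Delta)^{-1/2}F$, I would absorb one power of $(-\Delta)^{-1/2}$ against the two derivatives so that the effective source is essentially $(-\Delta)^{1/2}I(f(u))$, which sits in $X^{1,\theta_2}$ after applying \eqref{multi-I} (note $\|(-\Delta)^{1/2} w\|_{X^{1,0}} \sim \|w\|_{X^{1,0}}$ up to the low-frequency issue, handled by the $\langle\xi\rangle$ weight in the $X$-norm). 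The map $\Phi(Iu)$ is then defined on the ball of radius $\sim \|Ig\|_{H^2}+\|Ih\|_{L^2}$ in the product space appearing on the left of \eqref{Id-est}.

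The key steps, in order, are as follows. Apply \eqref{lin-hom} with $\sigma=2$, $\theta=\frac12+$ to control the homogeneous part by $\|Ig\|_{H^2}+\|Ih\|_{L^2}$. Apply \eqref{lin-inhom} with $\sigma=2$, $\theta_1=\frac12+$ and a choice $-\frac12<\theta_2\le 0$ (e.g.\ $\theta_2=0-$) to the Duhamel term, producing a gain of $\delta^{1-\theta_1+\theta_2}=\delta^{\frac12-}$. Then estimate the source by \eqref{multi-I}, which gives
\begin{equation*}
\|I(f(u))\|_{X^{1,0}} \lesssim \|Iu\|_{X^{2,\frac12+}_\delta}^{2k+1},
\end{equation*}
since $f(u)=|u|^{2k}u$ is a $(2k+1)$-fold product. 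Collecting these, the nonlinear contribution to $\Phi(Iu)$ is bounded by $\delta^{\frac12-}\|Iu\|^{2k+1}$, so on the ball of radius $R\sim\|Ig\|_{H^2}+\|Ih\|_{L^2}$ the map is a self-map and a contraction provided $\delta^{\frac12-}R^{2k}\lesssim 1$, which is exactly the threshold \eqref{d-est}. The difference estimate for contraction uses the multilinearity of $f$: writing $f(u)-f(v)$ as a telescoping sum of $(2k+1)$ terms each containing a factor $(u-v)$ and $2k$ factors of $u$ or $v$, and applying \eqref{multi-I} (or rather its multilinear form \eqref{multilin-I}) to each. This yields a Lipschitz constant $\lesssim \delta^{\frac12-}R^{2k}$, closing the contraction. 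Uniqueness, persistence of regularity $Iu\in C([0,\delta];H^2)$ via \eqref{Sob-X} and the embedding $X^{2,\theta}\subset C(\R;H^2)$, and the bound \eqref{Id-est} all follow from the fixed-point construction in the standard way.

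The main obstacle I anticipate is bookkeeping the derivative counting at low frequencies, i.e.\ reconciling the two explicit $x$-derivatives on the nonlinearity in \eqref{main-I} with the $(-\Delta)^{-1/2}$-normalized form required by \eqref{lin-inhom} and the $X^{1,0}$ output of \eqref{multi-I}; the operator $(-\Delta)^{-1/2}$ is singular near $\xi=0$, so one must verify that the symbol bookkeeping (using $\omega(\xi)\simeq|\xi|\langle\xi\rangle^2$ from \eqref{symbol}) does not generate an uncontrolled low-frequency factor. This is a genuine subtlety because, unlike scale-invariant dispersive equations, the Boussinesq symbol forces us to track powers of $|\xi|$ and $\langle\xi\rangle$ separately. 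The remaining nonstandard point is the \emph{sharp} time dependence: to obtain \eqref{d-est} as stated (and the claimed improvement to $\simeq$ for the maximal time), I would verify that the only $\delta$-loss comes from the single factor $\delta^{1-\theta_1+\theta_2}=\delta^{\frac12-}$ in \eqref{lin-inhom}, with the $\theta_2<0$ freedom absorbing any endpoint losses, so that no further powers of $\delta$ are lost in the multilinear step (which is time-translation invariant and carries no $\delta$ of its own beyond the implicit restriction to $[0,\delta]$ built into the $X^{s,\theta}_\delta$ norms).
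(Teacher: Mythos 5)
Your proposal is correct and follows essentially the same route as the paper: a contraction mapping in the space whose norm is the left-hand side of \eqref{Id-est}, with the homogeneous part controlled by \eqref{lin-hom} at $(\sigma,\theta)=(2,\tfrac12+)$, the Duhamel term gaining $\delta^{\frac12-}$ from \eqref{lin-inhom} (the paper takes $\theta_2=0$ rather than $0-$), the two spatial derivatives absorbed against $(-\Delta)^{-1/2}$ via $|\xi|\le\langle\xi\rangle$ so that \eqref{multi-I}--\eqref{multilin-I} bound the source in $X^{1,0}$, and the same smallness condition $\delta^{\frac12-}R^{2k}\lesssim 1$ yielding \eqref{d-est}. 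The low-frequency subtlety you flag is in fact harmless, exactly as you suspect, since the symbol of $(-\Delta)^{-1/2}\partial_x^2$ is $|\xi|$, which is bounded by $\langle\xi\rangle$ and nonsingular at $\xi=0$.
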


\begin{proof}
We demonstrate the result by using a fixed-point argument for the equation
\begin{equation*}
w=\eta w^{(1)}+\eta_\delta w^{(2)},
\end{equation*}
where $0<\delta<1$,
\begin{equation*}
w^{(1)}_{tt}-w^{(1)}_{xx}-\beta w^{(1)}_{xxxx}-w^{(1)}_{xxxxxx}\,=\,0, \quad w^{(1)}(0)\,=\,Ig,\quad w^{(1)}_t(0)\,=\,(Ih)_x,
\end{equation*}
and
\begin{equation*}
w^{(2)}_{tt}-w^{(2)}_{xx}-\beta w^{(2)}_{xxxx}-w^{(2)}_{xxxxxx}\,=\,(I(|I^{-1}w|^{2k}I^{-1}w))_{xx}, \quad w^{(2)}(0)\,=\,w^{(2)}_t(0)\,=\,0.
\end{equation*}
If we denote the right-hand side of the above equation by $T(w)$, the goal is to show that $T$ is a contraction on a closed ball of the Banach space 
\[
\aligned
&W:=\{w \in X^{2,\frac{1}{2}+}, (-\Delta)^{-1/2}w_t\in X^{0,\frac{1}{2}+}\},\\ &\|w\|_W:=\|w\|_{X^{2,\frac{1}{2}+}}+\|(-\Delta)^{-1/2}w_t\|_{X^{0,\frac{1}{2}+}}.
\endaligned
\] 

We proceed by relying on \eqref{lin-hom} with $(\sigma,\theta)=(2, 1/2+)$, \eqref{lin-inhom} with $(\sigma,\theta_1,\theta_2)=(2, 1/2+, 0)$, and \eqref{multi-I}-\eqref{multilin-I} with $k$ and $s$ like in the statement of the theorem to infer
\begin{equation*}
\aligned
\|T(w)\|_W&\lesssim \|Ig\|_{H^2(\R)}+\|Ih\|_{L^2(\R)}+\delta^{\frac{1}{2}-}\|(-\Delta)^{-1/2}(I(|I^{-1}w|^{2k}I^{-1}w))_{xx}\|_{X^{0,0}}\\
&\lesssim \|Ig\|_{H^2(\R)}+\|Ih\|_{L^2(\R)}+\delta^{\frac{1}{2}-}\|I(|I^{-1}w|^{2k}I^{-1}w))\|_{X^{1,0}}\\
&\lesssim \|Ig\|_{H^2(\R)}+\|Ih\|_{L^2(\R)}+\delta^{\frac{1}{2}-}\|w\|^{2k+1}_{X^{2,\frac{1}{2}+}}\\
&\lesssim \|Ig\|_{H^2(\R)}+\|Ih\|_{L^2(\R)}+\delta^{\frac{1}{2}-}\|w\|^{2k+1}_W
\endaligned
\end{equation*}
and
\begin{equation*}
\aligned
\|T(w)-T(\tilde{w})\|_W&\lesssim \delta^{\frac{1}{2}-}\|(-\Delta)^{-1/2}(I(|I^{-1}w|^{2k}I^{-1}w-|I^{-1}\tilde{w}|^{2k}I^{-1}\tilde{w})_{xx}\|_{X^{0,0}}\\
&\lesssim \delta^{\frac{1}{2}-}\|I(|I^{-1}w|^{2k}I^{-1}w-|I^{-1}\tilde{w}|^{2k}I^{-1}\tilde{w})\|_{X^{1,0}}\\
&\lesssim \delta^{\frac{1}{2}-}(\|w\|^{2k}_{X^{2,\frac{1}{2}+}}+\|\tilde w\|^{2k}_{X^{2,\frac{1}{2}+}})\|w-\tilde{w}\|_{X^{2,\frac{1}{2}+}}\\
&\lesssim \delta^{\frac{1}{2}-}(\|w\|^{2k}_{W}+\|\tilde w\|^{2k}_{W})\|w-\tilde{w}\|_W.
\endaligned
\end{equation*}
Hence, by choosing 
\[
\delta^{\frac 12 -}(\|Ig\|_{H^2(\R)}+\|Ih\|_{L^2(\R)})^{2k}\lesssim 1,
\]
we deduce that $T$ is a contraction on a closed ball centered at the origin in $W$, whose radius $R$ satisfies
\[
R\simeq \|Ig\|_{H^2(\R)}+\|Ih\|_{L^2(\R)}.
\] 
It follows that the fixed point $w=Iu$ of the map $T$ is a solution to the IVP \eqref{main-I} on the time interval $[0,\delta]$, which also leads to \eqref{Id-est}. Using \eqref{Sob-X}, we obtain $w\in C(\R; H^2(\R))$ and, hence, $Iu\in C([0,\delta]; H^2(\R))$.
\end{proof}


\section{Key multilinear estimate}\label{multilinear}

In this section, for ease of notation, we write
\begin{equation*}
\int_{\xi_1+\ldots+\xi_n=0} f(\xi_1, \ldots, \xi_n)=\int_{\R^{n-1}} f(-\xi_2-\ldots-\xi_n, \xi_2, \ldots, \xi_n)\,d\xi_2\ldots d\xi_n
\end{equation*}
and we label by $M=M(\xi)$ the Fourier multiplier associated to the multiplier operator $I$, which is given according to \eqref{m-def} and \eqref{IsN-def} by
\begin{equation*}
\widehat{Iv}(\xi)=\widehat{I^{2-s}_N v}(\xi)=m^{2-s}\left(\frac{\xi}{N}\right)\widehat{v}(\xi).
\end{equation*}
Besides the previous LWP result, another crucial ingredient for proving Theorem \ref{main-th} is the following multilinear estimate.

\begin{theorem}
Let $k\geq 2$ be an integer, $\delta>0$,
\[
s>
\begin{cases}
\frac{1}{4}, \quad \text{if} \ \ k=2,\\
\frac{1}{2}, \quad \text{if}  \  \ k>2,\\
\end{cases}
\]
and take $N\geq 1$ to be sufficiently large depending on $s$. Under these assumptions,
\begin{equation}
\aligned
\Bigg | \int_0^\delta \int_{\xi_1+\ldots+\xi_{2k+2}=0} & \left(1-\frac{M(\xi_2+\ldots+\xi_{2k+2})}{M(\xi_2)\ldots M(\xi_{2k+2})}\right)\\
&\cdot |\xi_1|\widehat{Iw_1}(t,
\xi_1)\widehat{Iw_2}(t,\xi_2)\ldots\widehat{Iw_{2k+2}}(t,\xi_{2k+2})\,dt\Bigg|\\
\lesssim N^{-4+}&\|Iw_1\|_{X^{0, \frac{1}{2}+}_\delta}\|Iw_2\|_{X^{2, \frac{1}{2}+}_\delta}\ldots\|Iw_{2k+2}\|_{X^{2, \frac{1}{2}+}_\delta}
\endaligned
\label{multi-I-int}
\end{equation}
holds true.
\end{theorem}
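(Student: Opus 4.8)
The plan is to regard \eqref{multi-I-int} as a $(2k+2)$-linear form, reduce it by Plancherel to a spacetime integral of a product, and control it by pairing a pointwise bound on the commutator symbol with the linear estimates \eqref{w-Str}--\eqref{w-mf}. Writing $v_j:=Iw_j$ and recalling that the constraint $\xi_1=-(\xi_2+\ldots+\xi_{2k+2})$ together with the evenness of $M$ gives $M(\xi_2+\ldots+\xi_{2k+2})=M(\xi_1)$, the symbol is $\mu:=1-M(\xi_1)/(M(\xi_2)\cdots M(\xi_{2k+2}))$. First I would reduce to the case in which every $\widetilde{v_j}$ is nonnegative and, using that $\mu$ and the right-hand side are invariant under permutations of the indices $2,\ldots,2k+2$, order the frequencies as $|\xi_2|\geq\ldots\geq|\xi_{2k+2}|$. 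Since $M\equiv 1$ on $\{|\xi|\leq N\}$ by \eqref{m-def}, the symbol $\mu$ vanishes unless $|\xi_2|\gtrsim N$, so only that region needs treatment, and there the factor $N^{-4+}$ has to be produced.

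The core of the argument is a pointwise estimate for $\mu$, organized by how many frequencies are large. When only $\xi_1$ and $\xi_2$ exceed $N$, so that $M(\xi_3)=\ldots=M(\xi_{2k+2})=1$, I would write $\xi_1=-\xi_2-\zeta$ with $\zeta=\xi_3+\ldots+\xi_{2k+2}$ and expand $M(\xi_2+\zeta)$ by the mean value theorem; together with the elementary bound $|M'(\eta)|\lesssim M(\eta)/\langle\eta\rangle$ this yields the cancellation $|\mu|\lesssim|\xi_3|/|\xi_2|$. When two or more of $\xi_2,\ldots,\xi_{2k+2}$ are large, I would instead use the crude bound $|\mu|\leq 1+M(\xi_1)/(M(\xi_2)\cdots M(\xi_{2k+2}))\lesssim\prod_{|\xi_j|\gtrsim N}(|\xi_j|/N)^{2-s}$, which follows from $M(\xi_1)\leq 1$ and $M(\xi_j)^{-1}\sim(|\xi_j|/N)^{2-s}$ for $|\xi_j|\gtrsim N$.

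With $\mu$ dominated pointwise by a product of single-frequency multipliers, I would bound the spacetime integral by H\"older in $(t,x)$, assigning the derivative factor $|\xi_1|\widehat{v_1}$ to the Kato smoothing bound \eqref{w-K} (which absorbs exactly one derivative, so this factor is controlled by $\|v_1\|_{X^{0,\frac12+}}$ at no cost), and placing $v_2,\ldots,v_{2k+2}$ in the Strichartz bounds \eqref{w-Str}, the maximal function bound \eqref{w-mf}, or \eqref{w-infty}. For each $j\geq 2$ the surplus weight $\langle\xi_j\rangle^2$ pays both for the derivative cost of the estimate used and for the powers of $|\xi_j|/N$ coming from $\mu$, and the resulting gains $\langle\xi_j\rangle^{-s}N^{-(2-s)}$ (or their analogues carrying the maximal/$L^\infty$ losses) are summed over the dyadic shells $|\xi_j|\sim N_j\gtrsim N$. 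Which of \eqref{w-Str}--\eqref{w-infty} one is forced to use is dictated by the H\"older exponents: the $2k+1$ factors $v_2,\ldots,v_{2k+2}$ must satisfy $\sum 1/q_j=1$ in $x$ and, after Kato has consumed $\tfrac12$ of it, $\sum 1/p_j=\tfrac12$ in $t$, and balancing these against the Strichartz admissibility $3/p+1/q=\tfrac12$ closes without loss only when $k=2$; for $k>2$ there are too many factors and one must pay an embedding (or \eqref{w-infty}) loss, which is precisely what separates the thresholds $s>1/4$ and $s>1/2$.

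The main obstacle is the regime with the fewest large frequencies, namely the high-high interaction in which $\xi_1$ and a single $\xi_j$ are large and nearly opposite while the remaining frequencies are small: there $v_1$ carries no weight, the dispersion makes the two branches essentially nonresonant only for one choice of signs, and the whole gain must be squeezed out of the sharp cancellation $|\mu|\lesssim|\xi_3|/|\xi_2|$ combined with the single surplus weight $\langle\xi_j\rangle^2$. Making this case and the several-large-frequency case balance, so that the full decay and the sharp $s$-thresholds both emerge, is the delicate point and is where the present argument streamlines and improves on \cite{WE-14}. I expect the dyadic summation and the verification that every H\"older split closes on the infinite line to be the most technical, but routine, part of the write-up.
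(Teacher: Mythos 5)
Your overall architecture matches the paper's: reduce to nonnegative Fourier transforms and dyadic frequency shells, order $|\xi_2|\geq\ldots\geq|\xi_{2k+2}|$, note the symbol vanishes unless $|\xi_2|\gtrsim N$, bound the symbol by the mean value theorem ($|\mu|\lesssim|\xi_3|/|\xi_2|$ when only $\xi_1,\xi_2$ are large) or by the crude bound $M(\xi_1)/\prod_{j\geq 2}M(\xi_j)$ otherwise, and then run H\"older against the Kenig--Ponce--Vega estimates. However, there is a genuine gap in your H\"older allocation: you reserve Kato smoothing \eqref{w-K} for the factor $|\xi_1|\widehat{v_1}$ alone, and place all of $v_2,\ldots,v_{2k+2}$ in \eqref{w-Str}, \eqref{w-mf}, or \eqref{w-infty}, none of which \emph{gains} derivatives. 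This cannot produce $N^{-4+}$. In the critical regime $|\xi_1|\simeq|\xi_2|\sim N_2\gtrsim N\gg|\xi_3|$ (which is exactly the case that saturates the theorem), the powers of $N_2$ available are: $N_2^{-1}$ from the symbol, $N_2^{+1}$ from the weight $|\xi_1|$, $N_2^{c_1}$ from the estimate used on $v_1$, and $N_2^{c_2-2}$ from the estimate used on $v_2$ together with its $X^{2,\frac12+}$ weight, where $c_j$ is the derivative cost of the chosen linear estimate ($-1$ for Kato, $0$ for Strichartz, $\frac14$ for the maximal function, $\frac12+$ for $L^\infty_{t,x}$). The total is $N_2^{c_1+c_2-2}$, so the claimed bound forces $c_1+c_2\leq -2$, i.e.\ Kato smoothing must be applied to \emph{both} $v_1$ and $v_2$. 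Under your allocation the best case is $c_1=-1$, $c_2=0$, which yields only $N^{-3+}$ --- precisely the weaker exponent of Wang--Esfahani \cite{WE-14} that this theorem is designed to improve. The paper's proof indeed uses \eqref{w-K} on two factors in every case: on $Iw_1,Iw_2$ when $N_2\gg N_3$, and on $Iw_2,Iw_3$ (with $Iw_1$ moved to the maximal function norm) when $N_2\simeq N_3$.

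This missing idea also corrects your exponent bookkeeping. Once two factors occupy $L^\infty_xL^2_t$, the time integrability is exhausted ($\frac12+\frac12=1$), so the remaining $2k$ factors must all be placed in $L^{q}_xL^\infty_t$ norms with $\sum 1/q_j=1$; the only available estimates of this type are the maximal function \eqref{w-mf} (cost $\frac14$ derivative) and \eqref{w-infty} (cost $\frac12+$), which forces four maximal factors plus $2k-4$ sup-norm factors. It is this structure --- not Strichartz admissibility --- that produces the thresholds $s>\frac14$ for $k=2$ and $s>\frac12$ for $k>2$, via the requirement that $\langle x\rangle^{\frac74}M(x)$, respectively $\langle x\rangle^{\frac32-}M(x)$, be nondecreasing. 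Indeed, your claim that the $k=2$ case ``closes without loss'' using \eqref{w-Str} is inconsistent with your own assertion that $s>\frac14$ is needed there: cost-free Strichartz placements would require only $s>0$, but, as shown above, they cannot reach $N^{-4+}$. A further minor point: \eqref{w-Str} is a time-first norm $L^p_tL^q_x$ while \eqref{w-K} and \eqref{w-mf} are space-first, so mixing them inside a single H\"older application requires justification; the paper sidesteps this by working exclusively with space-first norms.
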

\begin{proof}
In arguing for the above bound, we first make the observation that it is the consequence of the slightly sharper dyadic version, i.e., 
\begin{equation}
\aligned
\Bigg | \int_0^\delta \int\limits_{\underset{|\xi_i|\simeq N_i\in 2^{\mathbb{Z}}}{\xi_1+\ldots+\xi_{2k+2}=0}} & \left(1-\frac{M(\xi_2+\ldots+\xi_{2k+2})}{M(\xi_2)\ldots M(\xi_{2k+2})}\right)\\
&\cdot |\xi_1|\widehat{Iw_1}(t,
\xi_1)\widehat{Iw_2}(t,\xi_2)\ldots\widehat{Iw_{2k+2}}(t,\xi_{2k+2})\,dt\Bigg|\\
\lesssim N^{-4+}N^{0-}_{\text{max}}&\|Iw_1\|_{X^{0, \frac{1}{2}+}_\delta}\|Iw_2\|_{X^{2, \frac{1}{2}+}_\delta}\ldots\|Iw_{2k+2}\|_{X^{2, \frac{1}{2}+}_\delta},
\endaligned
\label{multi-I-int-dyadic}
\end{equation}
where $\displaystyle N_{\text{max}}=\max_{1\leq i\leq 2k+2}N_i$. 

Next, based on the symmetry of this estimate in the $(\xi_2,\ldots,\xi_{2k+2})$ variables, we can make the assumption to only work in the $N_2\geq\ldots\geq N_{2k+2}$ regime. Moreover, another simplifying reduction is attained by noticing that on the domain of integration we have
\[
|\xi_1|\leq |\xi_2|+\ldots+|\xi_{2k+2}|,
\] 
which implies $N_1\lesssim N_2$. Finally, we can also assume that $N_2\gtrsim N$, since $N_2\ll N$ would lead to $M(\xi_1)=\ldots=M(\xi_{2k+2})=1$ and, thus,
\[
1-\frac{M(\xi_1)}{M(\xi_2)\ldots M(\xi_{2k+2})}=0.
\] 
Before starting the actual proof of \eqref{multi-I-int-dyadic}, we make one more notational convention to actually write $M(N_i)$ for $M(\xi_i)$, given the definition of $M$ and the dyadic localization of $|\xi_i|$. Additionally, we claim that a calculus-level analysis allows us to work, for all intended purposes, with $x\mapsto \langle x\rangle^\alpha M(x)$ being nondecreasing on $\R_+$ if $\alpha+s>2$ and $N$ is sufficiently large depending on $\alpha$ and $s$.

The argument consists in analyzing separately the complementary cases $N_2\gtrsim N\gg N_3$, $N_2\gg N_3\gtrsim N$, and $N_2\simeq N_3\gtrsim N$. For the first one, since $\xi_1+\ldots+\xi_{2k+2}=0$, we have
\[
N_1\simeq N_2 \gtrsim N\gg N_3\geq\ldots\geq N_{2k+2},
\]
which further implies
\[
M(\xi_3)=\ldots= M(\xi_{2k+2})=1.
\] 
It follows that
\begin{equation*}
\aligned
\bigg|1-&\frac{M(\xi_2+\ldots+\xi_{2k+2})}{M(\xi_2)\ldots M(\xi_{2k+2})}\bigg|= \left|1-\frac{M(\xi_2+\ldots+\xi_{2k+2})}{M(\xi_2)}\right|\\
&\qquad\leq \frac{\sup_{0\leq a\leq 1}|M'(\xi_2+a(\xi_3+\ldots+\xi_{2k+2}))|\cdot |\xi_3+\ldots+\xi_{2k+2}|}{M(\xi_2)}\\
&\qquad\lesssim \frac{|M'(N_2)|N_3}{M(N_2)}\lesssim \frac{N_3}{N_2},
\endaligned
\end{equation*}
with the last estimate being the consequence of $N_2\gtrsim N$ and of the definition of $M$. Now, we also take advantage of \eqref{w-K}, \eqref{w-mf}, and \eqref{w-infty} (only if $k>2$) to deduce
\begin{equation*}
\aligned
\text{(LHS) of \eqref{multi-I-int-dyadic}}\ \lesssim\ &\frac{N_1N_3}{N_2}\|Iw_1\ldots Iw_{2k+2}\|_{L^1_{x,t\in[0,\delta]}}\\
\lesssim\ &N_3\|Iw_1\|_{L^\infty_xL^2_{t\in[0,\delta]}}\|Iw_2\|_{L^\infty_xL^2_{t\in[0,\delta]}}\|Iw_3\|_{L^4_xL^\infty_{t\in[0,\delta]}}\ldots \|Iw_6\|_{L^4_xL^\infty_{t\in[0,\delta]}}\\
&\cdot \|Iw_{7}\|_{L^\infty_{x,t\in[0,\delta]}}\ldots \|Iw_{2k+2}\|_{L^\infty_{x,t\in[0,\delta]}}\\
\lesssim\ &N_3\|Iw_1\|_{X^{-1, \frac{1}{2}+}_\delta}\|Iw_2\|_{X^{-1, \frac{1}{2}+}_\delta}\|Iw_3\|_{X^{\frac{1}{4}, \frac{1}{2}+}_\delta}\ldots \|Iw_6\|_{X^{\frac{1}{4}, \frac{1}{2}+}_\delta}\\
&\cdot \|Iw_{7}\|_{X^{\frac{1}{2}+, \frac{1}{2}+}_\delta}\ldots \|Iw_{2k+2}\|_{X^{\frac{1}{2}+, \frac{1}{2}+}_\delta}\\
\lesssim\ &\frac{N_3}{\langle N_1\rangle\langle N_2\rangle^3\langle N_3\rangle^{\frac{7}{4}}\ldots\langle N_6\rangle^{\frac{7}{4}}\langle N_7\rangle^{\frac{3}{2}-}\ldots\langle N_{2k+2}\rangle^{\frac{3}{2}-}}\\
&\cdot\|Iw_1\|_{X^{0, \frac{1}{2}+}_\delta}\|Iw_2\|_{X^{2, \frac{1}{2}+}_\delta}\ldots \|Iw_{2k+2}\|_{X^{2, \frac{1}{2}+}_\delta}\\
\lesssim\ &\frac{1}{\langle N_2\rangle^4}\|Iw_1\|_{X^{0, \frac{1}{2}+}_\delta}\|Iw_2\|_{X^{2, \frac{1}{2}+}_\delta}\ldots \|Iw_{2k+2}\|_{X^{2, \frac{1}{2}+}_\delta}\\
\lesssim\ &\frac{1}{N^{4-}N_2^{0+}}\|Iw_1\|_{X^{0, \frac{1}{2}+}_\delta}\|Iw_2\|_{X^{2, \frac{1}{2}+}_\delta}\ldots \|Iw_{2k+2}\|_{X^{2, \frac{1}{2}+}_\delta},
\endaligned
\end{equation*}
which proves the claim in this case. 

For the remaining two scenarios, due to $N_1\lesssim N_2$, $M$ being even, nonincreasing on $\R_+$, and $0<M\leq 1$, we can estimate the symbol in the integral as
\begin{equation*}
\aligned
\left|1-\frac{M(\xi_2+\ldots+\xi_{2k+2})}{M(\xi_2)\ldots M(\xi_{2k+2})}\right|&= \left|1-\frac{M(\xi_1)}{M(\xi_2)\ldots M(\xi_{2k+2})}\right|\\&\lesssim 1+ \frac{M(N_1)}{M(N_2)\ldots M(N_{2k+2})}\\
&\lesssim \frac{M(N_1)}{M(N_2)\ldots M(N_{2k+2})}.
\endaligned
\end{equation*}
If we are in the case $N_2\gg N_3\gtrsim N$, then we have, as in the first one, $N_1\simeq N_2$ and we can similarly derive
\begin{equation*}
\aligned
\text{(LHS) of \eqref{multi-I-int-dyadic}}\ \lesssim\ &\frac{N_1M(N_1)}{M(N_2)\ldots M(N_{2k+2})}\|Iw_1\ldots Iw_{2k+2}\|_{L^1_{x,t\in[0,\delta]}}\\
\lesssim\ &\frac{N_2}{M(N_3)\ldots M(N_{2k+2})}\|Iw_1\|_{X^{-1, \frac{1}{2}+}_\delta}\|Iw_2\|_{X^{-1, \frac{1}{2}+}_\delta}\\
&\cdot \|Iw_3\|_{X^{\frac{1}{4}, \frac{1}{2}+}_\delta}\ldots \|Iw_6\|_{X^{\frac{1}{4}, \frac{1}{2}+}_\delta}\|Iw_{7}\|_{X^{\frac{1}{2}+, \frac{1}{2}+}_\delta}\ldots \|Iw_{2k+2}\|_{X^{\frac{1}{2}+, \frac{1}{2}+}_\delta}\\
\lesssim\ &\frac{N_2}{\langle N_1\rangle\langle N_2\rangle^3}\frac{1}{\langle N_3\rangle^{\frac{7}{4}}M(N_3)\ldots\langle N_6\rangle^{\frac{7}{4}}M(N_6)}\\
&\cdot \frac{1}{\langle N_7\rangle^{\frac{3}{2}-}M(N_7)\ldots\langle N_{2k+2}\rangle^{\frac{3}{2}-}M(N_{2k+2})}\\
&\cdot\|Iw_1\|_{X^{0, \frac{1}{2}+}_\delta}\|Iw_2\|_{X^{2, \frac{1}{2}+}_\delta}\ldots \|Iw_{2k+2}\|_{X^{2, \frac{1}{2}+}_\delta}.\endaligned
\end{equation*}
As argued before, since $s>1/4$ and $N$ is sufficiently large depending on $s$, we can rely on  $x\mapsto \langle x\rangle^{7/4} M(x)$ being nondecreasing on $\R_+$ and, thus, we have
\begin{equation*}
\langle N_3\rangle^{\frac{7}{4}}M(N_3)\gtrsim \langle N\rangle^{\frac{7}{4}}M(N)=\langle N\rangle^{\frac{7}{4}}, \qquad \langle N_i\rangle^{\frac{7}{4}}M(N_i)\geq 1, \quad 4\leq i\leq 6.
\end{equation*}
If $k>2$, one needs to also use $s>1/2$ to infer
\begin{equation*}
\langle N_i\rangle^{\frac{3}{2}-}M(N_i)\geq 1, \quad 7\leq i\leq 2k+2.
\end{equation*}
Hence, we can follow up in estimating the integral and deduce
\begin{equation*}
\aligned
\text{(LHS) of \eqref{multi-I-int-dyadic}}\ 
\lesssim\ &\frac{1}{\langle N_2\rangle^3\langle N\rangle^{\frac{7}{4}}}\|Iw_1\|_{X^{0, \frac{1}{2}+}_\delta}\|Iw_2\|_{X^{2, \frac{1}{2}+}_\delta}\ldots \|Iw_{2k+2}\|_{X^{2, \frac{1}{2}+}_\delta}\\
\lesssim\ &\frac{1}{N^{\frac{19}{4}-}N_2^{0+}}\|Iw_1\|_{X^{0, \frac{1}{2}+}_\delta}\|Iw_2\|_{X^{2, \frac{1}{2}+}_\delta}\ldots \|Iw_{2k+2}\|_{X^{2, \frac{1}{2}+}_\delta},
\endaligned
\end{equation*}
which is an even sharper bound than the one obtained in the first case.

Finally, when $N_2\simeq N_3\gtrsim N$, the analysis changes slightly from the one in the second case, with $Iw_1$ and $Iw_3$ being estimated now in $L^4_xL^\infty_{t\in[0,\delta]}$ and $L^\infty_xL^2_{t\in[0,\delta]}$, respectively. Accordingly, we obtain
\begin{equation*}
\aligned
\text{(LHS) of \eqref{multi-I-int-dyadic}}\ \lesssim\ &\frac{N_1M(N_1)}{M(N_2)\ldots M(N_{2k+2})}\|Iw_1\ldots Iw_{2k+2}\|_{L^1_{x,t\in[0,\delta]}}\\
\lesssim\ &\frac{N_1M(N_1)}{M(N_2)\ldots M(N_{2k+2})}\|Iw_1\|_{L^4_xL^\infty_{t\in[0,\delta]}}\|Iw_2\|_{L^\infty_xL^2_{t\in[0,\delta]}}\\
&\cdot\|Iw_3\|_{L^\infty_xL^2_{t\in[0,\delta]}} \|Iw_4\|_{L^4_xL^\infty_{t\in[0,\delta]}}\|Iw_5\|_{L^4_xL^\infty_{t\in[0,\delta]}} \|Iw_6\|_{L^4_xL^\infty_{t\in[0,\delta]}}\\
&\cdot\|Iw_{7}\|_{L^\infty_{x,t\in[0,\delta]}}\ldots \|Iw_{2k+2}\|_{L^\infty_{x,t\in[0,\delta]}}\\
\lesssim\ &\frac{N^{\frac{5}{4}}_1M(N_1)}{M(N_2)\ldots M(N_{2k+2})}\|Iw_1\|_{X^{0, \frac{1}{2}+}_\delta}\|Iw_2\|_{X^{-1, \frac{1}{2}+}_\delta}\\
&\cdot \|Iw_3\|_{X^{-1, \frac{1}{2}+}_\delta} \|Iw_4\|_{X^{\frac{1}{4}, \frac{1}{2}+}_\delta}\|Iw_5\|_{X^{\frac{1}{4}, \frac{1}{2}+}_\delta} \|Iw_6\|_{X^{\frac{1}{4}, \frac{1}{2}+}_\delta}\\
&\cdot \|Iw_{7}\|_{X^{\frac{1}{2}+, \frac{1}{2}+}_\delta}\ldots \|Iw_{2k+2}\|_{X^{\frac{1}{2}+, \frac{1}{2}+}_\delta}\\
\lesssim\ &\frac{N^{\frac{5}{4}}_1M(N_1)}{\langle N_2\rangle^3M(N_2)\langle N_3\rangle^3M(N_3)}\\
&\cdot\frac{1}{\langle N_4\rangle^{\frac{7}{4}}M(N_4)\langle N_5\rangle^{\frac{7}{4}}M(N_5)\langle N_6\rangle^{\frac{7}{4}}M(N_6)}\\
&\cdot \frac{1}{\langle N_7\rangle^{\frac{3}{2}-}M(N_7)\ldots\langle N_{2k+2}\rangle^{\frac{3}{2}-}M(N_{2k+2})}\\
&\cdot\|Iw_1\|_{X^{0, \frac{1}{2}+}_\delta}\|Iw_2\|_{X^{2, \frac{1}{2}+}_\delta}\ldots \|Iw_{2k+2}\|_{X^{2, \frac{1}{2}+}_\delta}\\
\lesssim\ &\frac{N^{\frac{5}{4}}_1M(N_1)}{\langle N_2\rangle^6M^2(N_2)}\|Iw_1\|_{X^{0, \frac{1}{2}+}_\delta}\|Iw_2\|_{X^{2, \frac{1}{2}+}_\delta}\ldots \|Iw_{2k+2}\|_{X^{2, \frac{1}{2}+}_\delta}\\
\lesssim\ &\frac{1}{N^{\frac 72}N_2^{\frac 54}}\|Iw_1\|_{X^{0, \frac{1}{2}+}_\delta}\|Iw_2\|_{X^{2, \frac{1}{2}+}_\delta}\ldots \|Iw_{2k+2}\|_{X^{2, \frac{1}{2}+}_\delta}\\
\lesssim\ &\frac{1}{N^{\frac{19}{4}-}N_2^{0+}}\|Iw_1\|_{X^{0, \frac{1}{2}+}_\delta}\|Iw_2\|_{X^{2, \frac{1}{2}+}_\delta}\ldots \|Iw_{2k+2}\|_{X^{2, \frac{1}{2}+}_\delta},
\endaligned
\end{equation*}
where the line before the last one is due to $N_1\lesssim N_2$, $M\leq 1$, and
\[
\langle N_2\rangle^{\frac{7}{4}}M(N_2)\gtrsim \langle N\rangle^{\frac{7}{4}}.
\]
This finishes the argument for the theorem.
\end{proof}

\begin{remark}
As commented in Subsection \ref{main-result}, this multilinear estimate is sharper than its $k=1$ counterpart in \cite{WE-14}, with $N^{-4+}$ replacing $N^{-3+}$. Hence, we are able to prove GWP for $2-2/(3k)<s<2$ rather than for the expected $2-1/(2k)<s<2$ range. Furthermore, our proof of \eqref{multi-I-int} does not require splitting the discussion of the $N_2\simeq N_3\gtrsim N$ scenario into four separate subcases as in \cite{WE-14}.  
\end{remark}


\section{Proof of the main result}

Now, we have all the elements in place to prove Theorem \ref{main-th}. The strategy, much like with other applications of the \textit{I-method} is to use iteratively the adapted LWP result (i.e., Theorem \ref{LWP-I}) in order to reach an arbitrary time of existence $T>0$ for the solution $u$ to \eqref{main}. According to \eqref{d-est}, this is possible if we control the growth of 
\[
t\mapsto \|Iu(t)\|_{H^2(\R)}+\|(-\Delta)^{-1/2}Iu_t(t)\|_{L^{2}(\R)}.
\]
We achieve this by using, among others, the energy \eqref{energy} and the multilinear estimate \eqref{multi-I-int}.

\begin{proof}[Proof of Theorem \ref{main-th}]
We start by invoking \eqref{I-smooth} to claim
\begin{equation}
\|Ig\|_{H^2(\R)}+\|Ih\|_{L^2(\R)}\lesssim N^{2-s}(\|g\|_{H^s(\R)}+\|h\|_{H^{s-2}(\R)})\lesssim N^{2-s}.
\label{Ig-N}
\end{equation}
If we couple this bound with an application of Theorem \ref{LWP-I} (in particular \eqref{Id-est}), we derive that a solution to \eqref{main-I} with $f(u)=|u|^{2k}u$ satisfies
\begin{equation}
\|Iu\|_{X^{2,\frac{1}{2}+}_\delta}+\|(-\Delta)^{-1/2}Iu_t\|_{X^{0,\frac{1}{2}+}_\delta}\lesssim N^{2-s},
\label{Id-N}
\end{equation}
with 
\begin{equation}
\delta^{\frac{1}{2}-}\simeq N^{-2k(2-s)}.
\label{d-N}
\end{equation}

Next, we follow the standard procedure of obtaining energy estimates for $Iu$ (i.e., we apply the multiplier operator $(-\Delta)^{-1/2}$ to \eqref{main-I}, multiply the resulting equation by $(-\Delta)^{-1/2}Iu_t$, and integrate by parts with respect to the spatial variable), which implies
\begin{equation*}
\aligned
\frac{1}{2}\frac{d}{dt}\Big\{&\|(Iu)_{xx}(t)\|^2_{L^2(\R)}-\beta\|(Iu)_x(t)\|^2_{L^2(\R)}+\|Iu(t)\|^2_{L^2(\R)}\\
&+\|(-\Delta)^{-1/2}(Iu)_t(t)\|^2_{L^{2}(\R)}\Big\}+\int_\R I(|u|^{2k}u)(t,x)Iu_t(t,x)\,dx=0.
\endaligned
\end{equation*}
Taking into account now \eqref{energy}, we infer
\begin{equation*}
\frac{d}{dt}\{E(Iu)(t)\}= \int_\R \left(|Iu(t,x)|^{2k}Iu(t,x)- I(|u|^{2k}u)(t,x)\right)Iu_t(t,x)\,dx.
\end{equation*}
If we factor in the fundamental theorem of calculus and Parseval's formula, then we deduce
\begin{equation*}
\aligned
E(Iu)(\delta)&-E(Iu)(0)\\
&=\int_0^\delta\int_\R \left(|Iu(t,x)|^{2k}Iu(t,x)- I(|u|^{2k}u)(t,x)\right)Iu_t(t,x)\,dx\,dt\\
&=\int_0^\delta \int\limits_{\xi_1+\ldots+\xi_{2k+2}=0} \bigg\{ \left(1-\frac{M(\xi_2+\ldots+\xi_{2k+2})}{M(\xi_2)\ldots M(\xi_{2k+2})}\right)\\
&\qquad\qquad\qquad\qquad\quad\cdot\widehat{Iu_t}(t,
\xi_1)\widehat{Iu}(t,\xi_2)\ldots\widehat{Iu}(t,\xi_{2k+2})\bigg\}\,dt.
\endaligned
\end{equation*}
This is the point in the argument where we use the multilinear estimate \eqref{multi-I-int} and \eqref{Id-N} to derive
\begin{equation*}
\aligned
|E(Iu)(\delta)-E(Iu)(0)|\lesssim N^{-4+} \|(-\Delta)^{-1/2}Iu_t\|_{X^{0,\frac{1}{2}+}_\delta}\|Iu\|^{2k+1}_{X^{2,\frac{1}{2}+}_\delta}\lesssim N^{(2k+2)(2-s)-4+}.
\endaligned
\end{equation*}
We also note that, based on \eqref{energy-bd}, \eqref{Ig-N}, Sobolev embeddings, and Mikhlin's multiplier theorem, we have
\begin{equation*}
\aligned
E(Iu)(0)&\simeq \|Ig\|^2_{H^2(\R)}+\|Ih\|^2_{L^{2}(\R)}+\|Ig\|^{2k+2}_{L^{2k+2}(\R)}\lesssim N^{4-2s}+\|g\|^{2k+2}_{L^{2k+2}(\R)}\\
&\lesssim N^{4-2s}+\|g\|^{2k+2}_{H^{\frac 12}(\R)}\lesssim N^{4-2s}.
\endaligned
\end{equation*}
If $N^{(2k+2)(2-s)-4+}\ll N^{4-2s}$, the last two inequalities imply $E(Iu)(\delta)\lesssim N^{4-2s}$. Due to \eqref{energy-bd}, we obtain
\begin{equation*}
\|Iu(\delta)\|_{H^{2}(\R)}+\|(-\Delta)^{-1/2}(Iu)_t(\delta)\|_{L^{2}(\R)}\lesssim N^{2-s}
\end{equation*}
and, consequently, we can run one more time what we have done so far, now on the time interval $[\delta, 2\delta]$. 

It follows that for a fixed time $T>0$, we can perform $T/\delta$ iterations of the previous scheme to cover $[0,T]$ if the energy $E(Iu)(t)$ doesn't double in size on this interval. This happens if
\[
\frac{N^{(2k+2)(2-s)-4+}T}{\delta}\ll N^{4-2s}
\]
holds true and, taking into account \eqref{d-N}, we can ensure this is the case if 
\[
T\simeq N^{6ks-12k+4-}.
\]
Since $2-2/(3k)<s$, the exponent of $N$ is positive and, thus, arbitrary large times of existence $T$ can be reached by choosing $N\gg1$ appropriately. 

Finally, using \eqref{I-smooth} and \eqref{energy-bd}, we infer
\begin{equation*}
\aligned
\sup_{0\leq t\leq T}& \left\{\|u(t)\|^2_{H^s(\R)}+\|(-\Delta)^{-1/2}u_t(t)\|^2_{H^{s-2}(\R)}\right\}\\
&\lesssim\sup_{0\leq t\leq T} \left\{\|Iu(t)\|^2_{H^2(\R)}+\|(-\Delta)^{-1/2}(Iu)_t(t)\|^2_{L^{2}(\R)}\right\}\\
&\lesssim\sup_{0\leq t\leq T} E(Iu)(t)\lesssim N^{4-2s}\simeq T^{\frac{4-2s}{6ks-12k+4}+}, 
\endaligned
\end{equation*}
which proves \eqref{u-hs} and finishes the whole argument.
\end{proof}

\bibliographystyle{amsplain}
\bibliography{bousbib}

\providecommand{\bysame}{\leavevmode\hbox to3em{\hrulefill}\thinspace}
\providecommand{\MR}{\relax\ifhmode\unskip\space\fi MR }
\providecommand{\MRhref}[2]{%
  \href{http://www.ams.org/mathscinet-getitem?mr=#1}{#2}
}
\providecommand{\href}[2]{#2}
\begin{thebibliography}{10}

\bibitem{CMV-96}
C.~I. Christov, G.~A. Maugin, and M.~G. Velarde, \emph{Well-posed {B}oussinesq
  paradigm with purely spatial higher-order derivatives}, Physical Review E
  \textbf{54} (1996), no.~4, 3621--3638.

\bibitem{CKSTT-01}
J.~Colliander, M.~Keel, G.~Staffilani, H.~Takaoka, and T.~Tao, \emph{Global
  well-posedness for {K}d{V} in {S}obolev spaces of negative index}, Electron.
  J. Differential Equations (2001), No. 26, pp. 1--7.

\bibitem{CKSTT-01-2}
\bysame, \emph{Global well-posedness for {S}chr\"{o}dinger equations with
  derivative}, SIAM J. Math. Anal. \textbf{33} (2001), no.~3, 649--669.

\bibitem{CKSTT-04}
\bysame, \emph{Multilinear estimates for periodic {K}d{V} equations, and
  applications}, J. Funct. Anal. \textbf{211} (2004), no.~1, 173--218.

\bibitem{DH-99}
P.~Daripa and W.~Hua, \emph{A numerical study of an ill-posed {B}oussinesq
  equation arising in water waves and nonlinear lattices: filtering and
  regularization techniques}, Appl. Math. Comput. \textbf{101} (1999), no.~2-3,
  159--207.

\bibitem{EF-12}
A.~Esfahani and L.~G. Farah, \emph{Local well-posedness for the sixth-order
  {B}oussinesq equation}, J. Math. Anal. Appl. \textbf{385} (2012), no.~1,
  230--242.

\bibitem{EFW-12}
A.~Esfahani, L.~G. Farah, and H.~Wang, \emph{Global existence and blow-up for
  the generalized sixth-order {B}oussinesq equation}, Nonlinear Anal.
  \textbf{75} (2012), no.~11, 4325--4338.

\bibitem{EW-14}
A.~Esfahani and H.~Wang, \emph{A bilinear estimate with application to the
  sixth-order {B}oussinesq equation}, Differential Integral Equations
  \textbf{27} (2014), no.~5-6, 401--414.

\bibitem{FG96}
Y.~F. Fang and M.~G. Grillakis, \emph{Existence and uniqueness for {B}oussinesq
  type equations on a circle}, Comm. Partial Differential Equations \textbf{21}
  (1996), no.~7-8, 1253--1277.

\bibitem{F092}
L.~G. Farah, \emph{Local solutions in {S}obolev spaces and unconditional
  well-posedness for the generalized {B}oussinesq equation}, Commun. Pure Appl.
  Anal. \textbf{8} (2009), no.~5, 1521--1539.

\bibitem{F09}
\bysame, \emph{Local solutions in {S}obolev spaces with negative indices for
  the ``good'' {B}oussinesq equation}, Comm. Partial Differential Equations
  \textbf{34} (2009), no.~1-3, 52--73.

\bibitem{FL10}
L.~G. Farah and F.~Linares, \emph{Global rough solutions to the cubic nonlinear
  {B}oussinesq equation}, J. Lond. Math. Soc. (2) \textbf{81} (2010), no.~1,
  241--254.

\bibitem{FW12}
L.~G. Farah and H.~Wang, \emph{Global solutions in lower order {S}obolev spaces
  for the generalized {B}oussinesq equation}, Electron. J. Differential
  Equations \textbf{2012} (2012), no.~41, 1--13.

\bibitem{KPV-89}
C.~E. Kenig, G.~Ponce, and L.~Vega, \emph{On the (generalized) {K}orteweg-de
  {V}ries equation}, Duke Math. J. \textbf{59} (1989), no.~3, 585--610.

\bibitem{KPV-93}
\bysame, \emph{Well-posedness and scattering results for the generalized
  {K}orteweg-de {V}ries equation via the contraction principle}, Comm. Pure
  Appl. Math. \textbf{46} (1993), no.~4, 527--620.

\bibitem{K13}
N.~Kishimoto, \emph{Sharp local well-posedness for the ``good'' {B}oussinesq
  equation}, J. Differential Equations \textbf{254} (2013), no.~6, 2393--2433.

\bibitem{KT10}
N.~Kishimoto and K.~Tsugawa, \emph{Local well-posedness for quadratic nonlinear
  {S}chr\"odinger equations and the ``good'' {B}oussinesq equation},
  Differential Integral Equations \textbf{23} (2010), no.~5-6, 463--493.

\bibitem{L93}
F.~Linares, \emph{Global existence of small solutions for a generalized
  {B}oussinesq equation}, J. Differential Equations \textbf{106} (1993), no.~2,
  257--293.

\bibitem{M-99}
G.~A. Maugin, \emph{Nonlinear waves in elastic crystals}, Oxford Mathematical
  Monographs, Oxford University Press, Oxford, 1999, Oxford Science
  Publications.

\bibitem{T-06}
T.~Tao, \emph{Nonlinear dispersive equations}, CBMS Regional Conference Series
  in Mathematics, vol. 106, Published for the Conference Board of the
  Mathematical Sciences, Washington, DC; by the American Mathematical Society,
  Providence, RI, 2006, Local and global analysis.

\bibitem{WE-14}
H.~Wang and A.~Esfahani, \emph{Global rough solutions to the sixth-order
  {B}oussinesq equation}, Nonlinear Anal. \textbf{102} (2014), 97--104.

\end{thebibliography}

\end{document}